\numberwithin{equation}{section}
\def\H{\mathcal H}
\def\R{\mathbb R}
\def\N{\mathbb N}
\def\e{\varepsilon}
\def\s{\sigma}
\def\vphi{\varphi}
\def\Div{{\rm div}\,}
\def\om{\omega}
\def\g{\gamma}
\def\Om{\Omega}
\def\de{\delta}
\def\Id{{\rm Id}}
\def\spt{{\rm spt}}
\def\pa{\partial}
\def\trace{{\rm tr}}
\def\G{\Gamma}
\def\Hi{\mathcal{H}}
\newcommand{\vol}{\mathrm{vol}\,}
\newcommand{\n}{\nabla}
\renewcommand{\Div}{{\rm div \,}}
\newcommand{\ov}{\overline}
\newcommand{\DIV}{{\rm Div \,}}
\newcommand{\cc}{\subset\subset}
\newcommand{\dett}{{\rm det}}
\def\weak{\stackrel{*}{\rightharpoonup}}
\def\B{\mathcal{B}}
\def\ee{\mathrm{e}_1}
\def\ind{1}
\theoremstyle{plain}
\newtheorem{theorem}{Theorem}[section]
\newtheorem{lemma}[theorem]{Lemma}
\newtheorem{corollary}[theorem]{Corollary}
\newtheorem{proposition}[theorem]{Proposition}
\newtheorem{remark}[theorem]{Remark}
\def\U{\mathcal{U}}
\def\psharp{{p^\#}}
\def\pstar{{p^\star}}
\def\ISO{{\rm ISO}\,}
\def\supp{{\rm spt}}
\def\Ha{{H}}
\def\He{H_{\e}}
\newcommand{\norm}{{L^\pstar(\Ha)}}
\newcommand{\pnorm}{{L^p(\Ha)}}
\newcommand{\tnorm}{{L^\psharp(\pa \Ha)}}
\title[A bridge between Sobolev and Escobar inequalities]{A bridge between Sobolev and Escobar inequalities \\ and beyond}
\author[Maggi]{Francesco Maggi}
\address[Francesco Maggi]{
\newline \indent Abdus Salam International Center for Theoretical Physics,
\newline \indent Strada Costiera 11, I-34151, Trieste, Italy.
\newline \indent On leave from the University of Texas at Austin}
\email{fmaggi@ictp.it}
\author[Neumayer]{Robin Neumayer}
\address[Robin Neumayer]{ \newline
\indent  Department of Mathematics, The University of Texas at Austin,  \newline
\indent 2515 Speedway Stop C1200, Austin, TX 78712, USA}
\email{rneumayer@math.utexas.edu}
\begin{document}

\maketitle

\begin{abstract}
The classical Sobolev and Escobar inequalities are embedded into the same one-parameter family of sharp trace-Sobolev inequalities on half-spaces. Equality cases are characterized for each inequality in this family by tweaking a well-known mass transportation argument and lead to a new comparison theorem for trace Sobolev inequalities. The case $p=2$ corresponds to a family of variational problems on conformally flat metrics which was previously settled by Carlen and Loss with their method of competing symmetries. In this case minimizers interpolate between conformally flat spherical and hyperbolic geometries, passing through the Euclidean geometry defined by the fundamental solution of the Laplacian.
\end{abstract}

\section{Introduction} \subsection{A variational problem interpolating the Sobolev and Escobar inequalities}\label{overview} The goal of this paper is to illustrate a strong link between the {\it Sobolev inequality} on $\R^n$
\begin{equation}
  \label{sobolev inequality lp}
  \|\nabla u\|_{L^p(\R^n)}\ge S\,\|u\|_{L^\pstar(\R^n)}\qquad \pstar=\frac{np}{n-p}\,,
\end{equation}
and the {\it Escobar inequality} on the half-space $H=\{x_1>0\}$
\begin{equation}
  \label{escobar inequality lp}
  \|\nabla u\|_{L^p(H)}\ge E\,\|u\|_{L^{\psharp}(\pa H)}\qquad\psharp=\frac{(n-1)p}{n-p}\,,
\end{equation}
where $n\ge2$ and $p\in[1,n)$.
These classical sharp inequalities both arise as particular cases of the variational problem
\begin{eqnarray}\label{phi p T}
  \Phi(T)=\Phi^{(p)}(T)=\inf\Big\{\|\nabla u\|_{L^p(H)}:\|u\|_{L^\pstar(H)}=1\,,\|u\|_{L^\psharp(\pa H)}=T\Big\}\qquad T\ge0\,,
\end{eqnarray}
with $T=0$ in the case of \eqref{sobolev inequality lp}, and with $T=T_E$ for a suitable $T_E>0$ in the case of \eqref{escobar inequality lp}. Our main result (consisting of Theorems \ref{thm: main} and \ref{thm: properties of Phi} below) characterizes the minimizers of $\Phi(T)$ for every $T>0$ and allows one to describe the behavior of $\Phi(T)$ for every $T>0$.

The cases $p=2$ and $p=1$ have interpretations in conformal geometry and in capillarity theory respectively. In particular, when $p=2$, \eqref{phi p T} amounts to minimizing a total curvature functional among conformally flat metrics on $H$ -- see \eqref{PsiP} below. An interesting feature of this problem is that the corresponding minimizing geometries change their character from spherical (for $T\in(0,T_E)$) to hyperbolic (for $T>T_E$).

The characterization of minimizers in \eqref{phi p T} when $p=2$  is due to Carlen and Loss. In \cite{carlenloss} they deduce this result by an elegant application of their method of competing symmetries. The competing symmetries in play are: (1) the operation of spherical decreasing rearrangement, and (2) the composition of a pull-back by inverse stereographic projection from $\R^n$ to the $n$-dimensional sphere $\mathbb{S}^n$, a rotation on $\mathbb{S}^n$, and a final push-forward by stereographic projection. The use of conformal invariance seems to pose a non-trivial obstacle to the applicability of this approach when $p\ne 2$. From this point of view, our use of mass transportation for extending the Carlen--Loss result to the full range $p\in(1,n)$ seems appropriate.

Let us start by setting our terminology and framework, focusing on the case $p\in(1,n)$. We work with locally summable functions $u\in L^1_{{\rm loc}}(\R^n)$ that are vanishing at infinity, that is, $|\{|u|>t\}|<\infty$ for every $t>0$. If $Du$ denotes the distributional gradient of $u$, then the minimization in \eqref{phi p T} is over functions with $Du=\nabla u\,dx$ for $\nabla u\in L^p(H;\R^n)$. For every such function, the Sobolev inequality \eqref{sobolev inequality lp} implies that $u\in L^\pstar(\R^n)$. The constant $S=S(n,p)>0$ appearing in \eqref{sobolev inequality lp} is, by definition, the largest possible constant. It can be computed by exploiting the fact, proven in \cite{aubin,talenti}, that equality holds in \eqref{sobolev inequality lp} if and only if there exist $\lambda>0$ and $z\in\R^n$ such that
\begin{equation}
  \label{sobolev equality}
  u(x)=\lambda^{(n-p)/p}\,U_S(\lambda(x-z))\qquad \forall x\in\R^n\,,
\end{equation}
where
\begin{equation}\label{eqn: US}
U_S(x)=(1+|x|^{p'})^{(p-n)/p}\qquad x\in\R^n\,.
\end{equation}
(Here, as usual, $p'=p/(p-1)$.) The Escobar inequality has a similar meaning, with $H$ replacing $\R^n$. Again, $E=E(n,p)>0$ denotes the largest admissible constant in \eqref{escobar inequality lp}. Equality holds in \eqref{escobar inequality lp} if and only if there exist $\lambda>0$ and $z\in \R^n$ with $z_1< 0$ such that
\begin{equation}
  \label{escobar equality}
  u(x)=\lambda^{(n-p)/p}\,U_E(\lambda(x-z))\qquad \forall x\in H\,,
\end{equation}
where $U_E$ is the fundamental solution of the $p$-Laplacian on $\R^n$:
\begin{equation}\label{eqn: UE}
U_E(x)=|x|^{(p-n)/(p-1)}\,,\qquad x\in\R^n\setminus\{0\}\,.	
\end{equation}
The Escobar inequality was proven, together with its characterization result, in \cite{Escobar1988} for $p=2$; see also \cite{Beckner93}. In \cite{Nazaret2006}, by means of mass transportation techniques, the inequality is proven for every $p\in(1,n)$, along with the optimality of $U_E$. The characterization result is still missing in \cite{Nazaret2006}, but this only depends on some minor technical points that are  filled here.

Referring to the monograph \cite{mazyaBOOK} for a broader picture on Sobolev-type inequalities, we now pass to the starting point of our analysis, which is the realization that \eqref{sobolev inequality lp} and \eqref{escobar inequality lp} can be ``embedded'' in the family of variational problems \eqref{phi p T}. Indeed:

\medskip

\noindent (a) The Sobolev inequality is essentially equivalent to the variational problem $\Phi(T)$ with the choice $T=0$. Indeed, if $u=0$ on $\pa H$, then by applying \eqref{sobolev inequality lp} to the zero extension of $u$ outside of $H$, we find that $\Phi(0)\ge S$. Next, by considering an appropriate sequence of scalings as in \eqref{sobolev equality} multiplied by smooth cutoff functions, we actually find that
\[
\Phi(0)=S\,.
\]
The characterization of equality cases in \eqref{sobolev inequality lp} implies that $\Phi(0)$ does not admit minimizers. However, a concentration-compactness argument shows that every minimizing sequence is asymptotically close to a sequence of optimal functions in the Sobolev inequality that is either concentrating at an interior point of $H$ or whose peaks have distance from $\pa H$ diverging to infinity. From this point of view, we consider the variational problem
\[
S=\inf\Big\{\|\nabla u\|_{L^p(\R^n)}:\|u\|_{L^\pstar(\R^n)}=1\Big\}
\]
to be essentially equivalent to $\Phi(0)$.

\medskip

\noindent (b) The Escobar inequality boils down to the variational problem $\Phi(T)$ corresponding to $T=T_E$ for the constant
\begin{equation}
  \label{TEnp}
T_E=T_E(n,p)=\frac{\|U_E\|_{L^{\psharp}(\{x_1=1\})}}{\|U_E\|_{L^{\pstar}(\{x_1>1\})}}\,.
\end{equation}
Indeed, a simple scaling argument shows that, for every function $u(x)$ as in \eqref{escobar equality}, one has
\[
\frac{\|u\|_{L^{\psharp}(\pa H)}}{\|u\|_{L^{\pstar}(H)}}=T_E\,
\]
independently of the choices of $\lambda$ and, more surprisingly, of $z$. Thus, by the definition of $T_E$ and the characterization of equality cases in \eqref{escobar inequality lp}, we have
\[
\|u\|_{L^\psharp(\pa H)}=T_E\qquad\mbox{for every $u$ optimal function in \eqref{escobar inequality lp} with $\|u\|_{L^\pstar(H)}=1$}\,.
\]
As a consequence,
\[
\Phi(T_E)=E\,,
\]
and (the variational problem defined by) the Escobar inequality is equivalent to \eqref{phi p T} with $T=T_E$.

\subsection{What is known about $\Phi(T)$} As already noticed in Section \ref{overview}, a full characterization of $\Phi(T)$ in the important case $p=2$ was already given by Carlen and Loss in \cite{carlenloss}. The situation is quite different when $p\ne 2$. We now collect the information that, to the best of our knowledge, is all that is presently known about $\Phi(T)$. As we have just seen, $\Phi(0)=S$ by the Sobolev inequality, and we have a global linear lower bound
\begin{equation}
  \label{Phi escobar bound}
  \Phi(T)\ge E\,T\qquad\forall T\ge 0\,,
\end{equation}
with equality if $T=T_E$, thanks to the Escobar inequality. Another piece of information comes from the validity of the {\it gradient domain inequality} (see \cite[Section 7.2]{MVtwo2008} for the terminology adopted here) on $H$:
\begin{equation}
  \label{sobolev inequality lp gradient domain}
  \|\nabla u\|_{L^p(H)}\ge 2^{-1/n}\,S\,\|u\|_{L^\pstar(H)}\,,
\end{equation}
with equality if and only if there exists $\lambda>0$ such that
\[
u(x)=\lambda^{(n-p)/p}\,U_S(\lambda\,x)\qquad \forall x\in\R^n\,.
\]
The validity of \eqref{sobolev inequality lp gradient domain}, with equality cases, follows immediately by applying the Sobolev inequality \eqref{sobolev inequality lp} to the extension by reflection of $u$ to $\R^n$. The gradient domain inequality implies that
\begin{equation}
  \label{Phi constant bound}
  \Phi(T)\ge 2^{-1/n}\,S\,,\qquad\forall T\ge0
\end{equation}
with equality if and only if $T=T_0$ where
\[
T_0=\frac{\|U_S\|_{L^{\psharp}(\pa H)}}{\|U_S\|_{L^{\pstar}(H)}}\,.
\]
As we will prove later on (see Proposition \ref{prop: T and G}(i)),
\[
T_0<T_E\,,
\]
while clearly (by applying \eqref{sobolev inequality lp gradient domain} to an optimal function for \eqref{escobar inequality lp})
\begin{equation}
  \label{G0 less than GE}
  \Phi(T_0)=2^{-1/n}\,S<E=\Phi(T_E)\,.
\end{equation}
Next, we notice that, thanks to the divergence theorem and H\"older's inequality, for every non-negative
 $u$ that is admissible in $\Phi(T)$, we have
\[
\int_{\pa H}u^{\psharp}=\int_{\pa H}u^{\psharp}(-\ee)\cdot\nu_H=\psharp\int_{H}u^{\psharp-1}(-\nabla u)\cdot \ee< \psharp \|\nabla u\|_{L^p(H)}\,\|u\|_{L^\pstar(H)}^{\pstar/p'}
\]
where H\"older's inequality must be strict (otherwise, $u$ would just depend on $x_1$, and thus could not satisfy $u\in L^\pstar(H)$). As a consequence, we find that, with strict inequality,
\begin{eqnarray}
  \label{global lower bound}
  \Phi(T)>\frac{T^\psharp}\psharp\qquad\forall T>0\,.
\end{eqnarray}
Finally, given any open connected Lipschitz set $\Om\subset\R^n$, let us set
\[
\Phi_\Om(T)=\inf\Big\{\|\nabla u\|_{L^p(\Om)}:\|u\|_{L^\pstar(\Om)}=1\,,\|u\|_{L^\psharp(\pa\Om)}=T\Big\}\qquad T\ge0\,,
\]
(so that $\Phi_H=\Phi$ by \eqref{phi p T}), and define
\[
\ISO(\Om)=\frac{P(\Om)}{|\Om|^{(n-1)/n}}\,,
\]
where $P(\Om)$ and $|\Om|$ denote the perimeter (i.e., the $(n-1)$-dimensional measure of the boundary) and volume of $\Om$. With this notation, the {\it Euclidean isoperimetric inequality} takes the form
\begin{equation}
  \label{isoperimetric inequality}
  \ISO(\Om)\ge\ISO(B_1)\,,
\end{equation}
with equality if and only if $\Om=B_R(x)=\{y\in\R^n:|y-x|<R\}$ for some $x\in\R^n$ and $R>0$. The following {\it trace-Sobolev comparison theorem} was proved in \cite{MV2005}:
\begin{equation}
  \label{Phi maggivillani}
  \Phi_\Om(T)\ge\Phi_{B_1}(T)\,,\qquad\forall T\in\Big[0,\ISO(B_1)^{1/\psharp}\Big]\,,
\end{equation}
with the additional information that: (i) if $0<T\le \ISO(B_1)^{1/\psharp}$, $\Phi_\Om(T)=\Phi_{B_1}(T)$, and $\Phi_\Om(T)$ admits a minimizer, then $\Om$ is a ball; (ii) $\Phi_{B_1}$ is strictly concave (and decreasing) on $[0,\ISO(B_1)^{1/\psharp}]$. Notice that \eqref{Phi maggivillani} cannot hold on a larger interval of $T$s: indeed, $\Phi_{B_1}(T)=0$ forces $T=\ISO(B_1)^{1/\psharp}$, and so if $\Om$ is not a ball and thus $\ISO(\Om)>\ISO(B_1)$, then
\[
\Phi_{B_1}(\ISO(\Om)^{1/\psharp})>0=\Phi_\Om(\ISO(\Om)^{1/\psharp})\,.
\]
This said, we can apply \eqref{Phi maggivillani} with $\Om=H$ to obtain an additional lower bound on $\Phi$ on the interval $[0,\ISO(B_1)^{1/\psharp}]$.

The constant lower bound given in \eqref{Phi constant bound} is actually stronger than the other three lower bounds for some values of $T$. Indeed, there exists $\de>0$ such that
\begin{equation}
  \label{nontrivial}
  \Phi(T_0)>\max\Big\{1_{[0,\ISO(B_1)^{1/\psharp}]}(T)\Phi_{B_1}(T)\,,ET\,,\frac{T^\psharp}\psharp\,\Big\}\qquad\mbox{if $|T-T_0|<\de$}\,.
\end{equation}
By continuity, it suffices to check this assertion at $T=T_0$, and since \eqref{global lower bound} is strict for every $T>0$, we only need to worry about \eqref{Phi escobar bound} and \eqref{Phi maggivillani}. The fact that $\Phi(T_0)>\Phi_{B_1}(T_0)$ if $T_0\le\ISO(B_1)^{1/\psharp}$ follows by property (i) after \eqref{Phi maggivillani} and from the existence of a minimizer for $\Phi(T_0)$ shown in Theorem~\ref{thm: main} below. At the same time, $\Phi(T_0)>E\,T_0$, for otherwise, the explicit minimizer in $\Phi(T_0)$, that is the ``half-Sobolev optimizer'' $U_{S,0}$ (see \eqref{def of USt} below), would be optimal in \eqref{escobar inequality lp}, contradicting the characterization of equality cases for \eqref{escobar inequality lp} (which is already implicitly contained in \cite{Nazaret2006}, and is rigorously established in here). This proves \eqref{nontrivial}. We thus find the qualitative picture of the known lower bounds on $\Phi(T)$ depicted in
\begin{figure}
  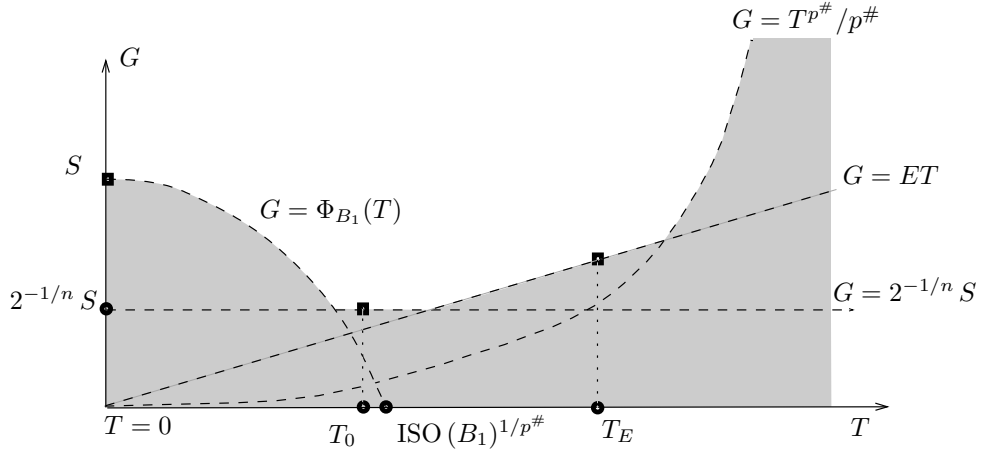
  \caption{{\small A qualitative picture of the known lower bounds on $\Phi(T)$. By combining the Sobolev and Escobar inequalities with the gradient domain inequality \eqref{sobolev inequality lp gradient domain}, the divergence theorem lower bound \eqref{global lower bound}, and the trace-Sobolev comparison theorem \eqref{Phi maggivillani} we conclude that $\Phi(T)$ lies above the gray region. The picture gives sharp information only for three values of $T$, namely $0$, $T_0$, and $T_E$, which are depicted by black squares. We also mention that numerical computations indicate the validity of $T_0<\ISO(B_1)^{1/\psharp}$ for every $n$ and $p$. We shall not give a formal proof of this fact, as it plays no role in our analysis.}}\label{fig situation}
\end{figure}
Figure \ref{fig situation}.

\subsection{Main results} Our main result consists of characterizing minimizers in $\Phi(T)$ for every $T>0$, and then using this knowledge to give a qualitative description of the behavior of $\Phi(T)$. The characterization result involves the following three families of functions:

\medskip

\noindent {\it Sobolev family:} Let $U_S$ be defined as in \eqref{eqn: US} and set, for every $t\in\R$,
\begin{equation}
  \label{def of USt}
  U_{S,t}(x)=\frac{U_S(x-t\,\ee)}{\|U_S(\textrm{id}-t\,\ee)\|_{L^{\pstar}(H)}} \qquad  x\in H\,,
\end{equation}
and
\begin{equation}\label{eqn: TS and GS}
T_S(t) = \| U_{S,t}\|_{L^\psharp(\pa H)}\,, \qquad
G_S(t) = \| \n U_{S,t}\|_{L^p( H)}\,.
\end{equation}
Thus, $U_{S,t}$ corresponds to translating the optimal function $U_S$ in the Sobolev inequality so that its maximum point lies at signed distance $t$ from $\pa H$, then multiplying the translated function by a constant factor to normalize the $L^\pstar$-norm in $H$ to be $1$.

\medskip

\noindent {\it Escobar family:} Letting $U_E$ be as in \eqref{eqn: UE}, we set for every $t<0$
\begin{equation}
  \label{def of UEt}
  U_{E,t}(x)=\frac{U_E(x-t\,\ee)}{\|U_E(\textrm{id}-t\,\ee)\|_{L^{\pstar}(H)}}\qquad x\in H\,.
\end{equation}
As noticed before, a simple computation (factoring out $|t|$ from $|x-t\,\ee|$ and then changing variables $y=-x/t$) shows that the trace and gradient norms of the $U_{E,t}$ are independent of $t<0$, and  we set
\begin{equation}\label{eqn: TE and GE}
\| U_{E,t}\|_{L^\psharp(\pa H)}=T_E\,,\qquad \| \n U_{E,t}\|_{L^p( H)}=G_E
\end{equation}
for these constant values. Each function $U_{E,t}$ is thus obtained by centering the fundamental solution of the $p$-Laplacian outside of $H$, and then by normalizing its $L^\pstar$-norm in $H$.

\medskip

\noindent {\it Beyond-Escobar family:} We consider the function
\begin{equation}
  \label{def of UBEt}
  U_{BE}(x)=(|x|^{p'}-1)^{(p-n)/p}\qquad |x|> 1\,,
\end{equation}
and define, for every $t<-1$,
\[
U_{BE,t}(x)=\frac{U_{BE}(x-t\,\ee)}{\|U_{BE}(\textrm{id}-t\,\ee)\|_{L^{\pstar}(H)}}\qquad x\in H\,.
\]
Correspondingly, for every $t<-1$, we set
\begin{equation}\label{eqn: TBE and GBE}
T_{BE}(t) = \| U_{BE,t}\|_{L^\psharp(\pa H)}\,, \qquad \qquad G_{BE}(t) =  \| \n U_{BE,t}\|_{L^p(H)}\,.
\end{equation}
As the name of this family of functions suggests, we later prove that $T_{BE}(t)>T_E$ for every $t<-1$, so that $\{U_{BE}(t)\}_{t<-1}$ enters the description of $\Phi(T)$ for $T>T_E$. Notice that \eqref{def of UBEt} defines a function on the complement of the unit sphere. The function $U_{BE,t}$ is thus obtained by centering this unit sphere {\it outside} of $H$, precisely at distance $|t|$ from $\pa H$, and the by normalizing its tail to have unit $L^\pstar$-norm in $H$.

\begin{theorem}[Characterization of minimizers of $\Phi(T)$]\label{thm: main}
  If $n\geq 2$ and $p\in (1,n)$, then for every $T>0$, there exists a minimizer in $\Phi(T)$ that is unique up to dilations and translations orthogonal to $\ee$. More precisely:
  \begin{enumerate}
  \item[(i)] the function $T_S(t)$ is strictly decreasing on $\R$ with range $(0,T_E)$ and with $T_S(0)=T_0<T_E$; in particular, for every $T\in(0,T_E)$, there exists a unique $t\in\R$ such that
      \begin{equation}
        \label{implicit S}
              T=T_S(t)\qquad \Phi(T)=G_S(t)
      \end{equation}
      and $U_{S,t}$ uniquely minimizes $\Phi(T)$ up to dilations and translations orthogonal to $\ee$;
  \item[(ii)] if $T=T_E$, then, up to dilations and translations orthogonal to $\ee$, $\{U_{E,t}:t<0\}$ is the unique family of minimizers of $\Phi(T_E)$;
  \item[(iii)] the function $T_{BE}(t)$ is strictly increasing on $(-\infty,-1)$ with range $(T_E,+\infty)$;  in particular, for every $T>T_E$ there exists a unique $t<-1$ such that
      \begin{equation}
        \label{implicit BE}
      T=T_{BE}(t)\qquad \Phi(T)=G_{BE}(t)
      \end{equation}
      and $U_{BE,t}$ uniquely minimizes $\Phi(T)$ up to dilations and translations orthogonal to $\ee$.
  \end{enumerate}
\end{theorem}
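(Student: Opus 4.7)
The strategy is to extend the mass transportation proof of the Escobar inequality from \cite{Nazaret2006} by allowing the target density to vary in a one-parameter family chosen so as to match the prescribed boundary mass $T$. Depending on the regime of $T$, the relevant family is $\{U_{S,t}\}_{t\in\R}$, $\{U_{E,t}\}_{t<0}$, or $\{U_{BE,t}\}_{t<-1}$. The argument then divides into two independent pieces: (a) analyzing the model functions to show that $T_S$ and $T_{BE}$ are strictly monotone with the claimed ranges, so that for every $T>0$ there is exactly one model function $v$ with trace norm $T$; and (b) a transport inequality showing that any admissible $u$ in $\Phi(T)$ satisfies $\|\n u\|_{L^p(H)} \geq \|\n v\|_{L^p(H)}$, with equality only if $u=v$ up to dilation and translation orthogonal to $\ee$.

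For part (a), I would write $T_S(t), G_S(t), T_{BE}(t), G_{BE}(t)$ as explicit integrals and differentiate in $t$. The key limits are $T_S(t)\to T_E$ as $t\to-\infty$, $T_S(t)\to 0$ as $t\to+\infty$, $T_{BE}(t)\to T_E$ as $t\to-\infty$, and $T_{BE}(t)\to+\infty$ as $t\to-1^-$. The two limits to $T_E$ should follow by rescaling, because in both cases $v$ viewed from $\pa H$ degenerates to a scaled copy of the $p$-Laplacian fundamental solution $U_E$; the other two limits are essentially elementary. Strict monotonicity should come from a direct computation showing that the derivative has definite sign, using the convexity structure of $U_S$ and $U_{BE}$.

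For part (b), after normalizing $\|u\|_{L^\pstar(H)}=1$, choose the unique $v$ in the relevant family with $\|v\|_{L^\psharp(\pa H)}=T$, and let $\n\phi$ be the Brenier map pushing $u^\pstar\,dx$ forward to $v^\pstar\,dx$. The Monge--Amp\`ere equation together with the arithmetic--geometric mean inequality yields
\begin{equation*}
n\int_H v^{\psharp} \le \int_H u^{\psharp}\,\Delta\phi,
\end{equation*}
after invoking the identity $\pstar(n-1)/n=\psharp$. Integrating by parts, the outward normal $-\ee$ produces a boundary term $-\int_{\pa H}u^{\psharp}\,\pa_{x_1}\phi$, which is non-positive since $\n\phi(H)\subset H$ forces $\pa_{x_1}\phi\ge 0$ on $\pa H$. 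Moving it to the other side gives
\begin{equation*}
n\int_H v^{\psharp} + \int_{\pa H}u^{\psharp}\,\pa_{x_1}\phi \le -\psharp\int_H u^{\psharp-1}\n u\cdot\n\phi,
\end{equation*}
and H\"older's inequality in the bulk (exploiting the key exponent relation $(\psharp-1)p'=\pstar$) bounds the right-hand side by $\psharp\|\n u\|_{L^p(H)}\bigl(\int|\n\phi|^{p'}u^{\pstar}\bigr)^{1/p'}$. A change of variables via $\n\phi$ turns the last factor into an integral of $|y|^{p'}$ (or $|y-t\ee|^{p'}$) against $v^\pstar$, an explicit constant depending only on $v$. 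The boundary contribution, in turn, is bounded from below in terms of $\|u\|_{L^\psharp(\pa H)}^{\psharp}=T^\psharp$ by a second H\"older estimate, using that $\pa_{x_1}\phi$ is pointwise comparable on $\pa H$ to a power of $v$. Putting the pieces together reduces to the desired inequality $\|\n u\|_{L^p(H)}\ge\|\n v\|_{L^p(H)}$, with rigidity in AM--GM forcing $D^2\phi=\lambda\,\Id$ and thus pinning $u$ to a scaling of $v$.

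The main obstacle will be the boundary analysis in the Beyond-Escobar regime. There $v=U_{BE,t}$ is singular on the sphere $\{|x-t\,\ee|=1\}$, which lies outside $H$ but approaches $\pa H$ as $t\to -1^-$; the transport potential $\phi$ loses regularity near this sphere, and justifying the boundary integration by parts requires a careful approximation procedure -- for instance, truncating $v$ near the singular sphere and away from infinity, running the transport inequality for the smooth approximations, and passing to the limit by dominated convergence on both the bulk and the boundary integrals. A related technical point, left implicit in \cite{Nazaret2006}, is to transfer the equality analysis from these approximations back to the original minimizer; this is precisely the gap in the earlier characterization for Escobar that the authors have announced they fill in.
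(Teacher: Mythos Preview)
Your overall strategy---mass transport from a generic competitor $u$ to the model profile $v$, combined with a separate analysis of the ranges of $T_S$ and $T_{BE}$---is exactly the paper's, but your proposed handling of the boundary term is where the argument breaks. You write that ``$\pa_{x_1}\phi$ is pointwise comparable on $\pa H$ to a power of $v$'' and plan to use this for a second H\"older estimate. This is not true: the Brenier map $\n\phi$ depends on the unknown function $u$ as well as on $v$, so $\pa_{x_1}\phi$ on $\pa H$ cannot be expressed purely in terms of $v$. The only general information you have is $\pa_{x_1}\phi\ge 0$ (since $\n\phi(H)\subset H$), which is not enough to produce a lower bound proportional to $T^{\psharp}$.

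The paper's device that you are missing is the following: before integrating by parts, subtract the constant vector field $t\,\ee$ from $\n\phi$ and work with $S=\n\phi-t\,\ee$. Since $\Div S=\Div\n\phi$, the bulk inequality is unchanged, but now the boundary integrand is $u^{\psharp}\,S\cdot(-\ee)$, and $S\cdot(-\ee)\le t$ pointwise on $\pa H$ because $\n\phi\cdot\ee\ge 0$ there. This immediately converts the boundary term into exactly $t\int_{\pa H}u^{\psharp}=t\,T^{\psharp}$, with no H\"older step and no dependence on the transport map. After H\"older in the bulk (which now gives the factor $Y(t,v)=(\int_H|y-t\ee|^{p'}v^{\pstar})^{1/p'}$ via change of variables), the inequality reads
\[
n\int_H v^{\psharp}\le \psharp\,Y(t,v)\,\|\n u\|_{L^p(H)}+t\,T^{\psharp}\,,
\]
and since $v$ itself realizes equality, matching trace norms yields $\|\n u\|_{L^p(H)}\ge\|\n v\|_{L^p(H)}$ directly.

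Two smaller remarks. First, for the monotonicity of $T_S$ and $T_{BE}$ the paper does \emph{not} differentiate: it shows injectivity by a short contradiction argument using the transport inequality itself (if $T_S(t_1)=T_S(t_2)$ then the inequality with parameter $t_1$ applied to $U_{S,t_2}$, and vice versa, forces $G_S(t_1)=G_S(t_2)$, whence the equality characterization gives $t_1=t_2$); the endpoint limits are then computed as you describe. Second, your worry about the singularity of $U_{BE,t}$ is misplaced: the singular sphere lies in $\{x_1<0\}$, so $U_{BE,t}$ is smooth on $\ov H$ and the moment condition $\int_H|x|^{p'}v^{\pstar}<\infty$ holds; the genuine approximation issue (Lemma in the appendix) is for the \emph{source} density $u$, which is a priori only in the energy space.
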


Theorem \ref{thm: main} provides an implicit description of $\Phi$ on $[0,\infty)$, and extends the Carlen--Loss theorem \cite{carlenloss} from the case $p=2$ to the full range $p\in(1,n)$. Notice that an implicit description of $\Phi_{B_1}$ on the interval $[0,\ISO(B_1)^{1/\psharp}]$ was obtained in \cite{MV2005}, and was at the basis of the further results obtained therein. (No characterization of $\Phi_{B_1}$ for $T>\ISO(B_1)^{1/\psharp}$ seems to be known.) Starting from the characterization of $\Phi$ obtained in Theorem \ref{thm: main}, we can obtain a quite complete picture of its properties, which is stated in the next result and illustrated in
\begin{figure}
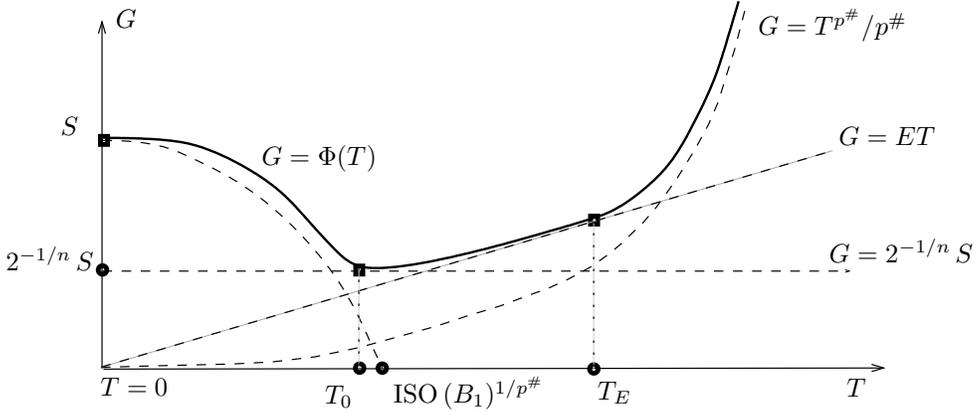\caption{{\small A qualitative picture of Theorem \ref{thm: properties of Phi}, which improves on the situation depicted in Figure \ref{fig situation}. First, since in Theorem \ref{thm: main} we have proved that $\Phi(T)$ always admits minimizers, we are sure that $\Phi(T)>\Phi_{B_1}(T)$ for every $T\in[0,\ISO(B_1)^{1/\psharp}]$, that is to say, the comparison theorem \eqref{Phi maggivillani} is never optimal (but at $T=0$). Notice also that the divergence theorem lower bound \eqref{global lower bound} turns out to be sharp, and is asymptotically saturated by the functions $U_{BE,t}$ as $t\to 1^-$.}}
  \label{fig summary}
\end{figure}
Figure \ref{fig summary}.

\begin{theorem}[Properties of $\Phi(T)$]\label{thm: properties of Phi}
If $n\ge 2$ and $p\in(1,n)$, then $\Phi(T)$ is differentiable on $(0,\infty)$, it is strictly decreasing on $(0,T_0)$ with $\Phi(0)=S$ and $\Phi(T_0)=2^{-1/n}\,S$ and strictly increasing on $(T_0,\infty)$ with
\begin{equation}\label{asymptotic behavior}
\Phi(T)=\frac{T^\psharp}{\psharp}+ o(1) \qquad \text{as } T\to \infty\,.
\end{equation}
Moreover, $\Phi(T)$ is strictly convex on $(T_0,+\infty)$, and there exists $T_*\in(0,T_0)$ such that $\Phi(T)$ is strictly concave on $(0,T_*)$.
\end{theorem}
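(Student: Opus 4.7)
The plan is to exploit the implicit description of $\Phi$ given by Theorem \ref{thm: main}: on $[0,T_E]$ one has $\Phi(T)=G_S(T_S^{-1}(T))$, on $[T_E,+\infty)$ one has $\Phi(T)=G_{BE}(T_{BE}^{-1}(T))$, and the Escobar value $\Phi(T_E)=G_E$ lies at the junction. Each of the four functions $T_S,G_S:\R\to(0,\infty)$ and $T_{BE},G_{BE}:(-\infty,-1)\to(0,\infty)$ is smooth in $t$, so regularity, monotonicity, and convexity of $\Phi$ all reduce to the study of smooth parametric curves $t\mapsto(T_*(t),G_*(t))$.

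First, I would derive an envelope identity. If $u$ minimizes $\Phi(T)$, then the Euler--Lagrange equations read
\[
-\Div(|\n u|^{p-2}\n u)=\alpha\,u^{\pstar-1}\ \text{in $H$},\qquad |\n u|^{p-2}(-\n u\cdot\ee)=\beta\,u^{\psharp-1}\ \text{on $\pa H$},
\]
and testing with $u$ together with the homogeneities give $\Phi(T)^p=\alpha+\beta T^{\psharp}$. Direct differentiation along the Sobolev/BE parameter $t$ (or a standard envelope theorem argument) then yields
\[
\Phi'(T)=\frac{\psharp}{p}\,\beta\,T^{\psharp-1}\,\Phi(T)^{1-p}\qquad T\in(0,T_E)\cup(T_E,+\infty).
\]
Monotonicity is therefore equivalent to the sign of $\beta$. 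Evaluating $|\n U_{S,t}|^{p-2}(-\n U_{S,t}\cdot\ee)/U_{S,t}^{\psharp-1}$ on $\pa H$, a direct computation shows that $\beta$ has the sign of $-t$, so $\Phi$ is strictly decreasing on $(0,T_0)$ and strictly increasing on $(T_0,T_E)$; the analogous computation for $U_{BE,t}$ (with $t<-1$) gives $\beta>0$ throughout $(T_E,+\infty)$. The values $\Phi(0)=S$ and $\Phi(T_0)=2^{-1/n}\,S$ have already been recorded.

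For the asymptotic $\Phi(T)=T^{\psharp}/\psharp+o(1)$, the lower bound $\Phi(T)>T^{\psharp}/\psharp$ is \eqref{global lower bound}. For the matching upper bound, I would plug $U_{BE,t}$ directly and track the leading order as $t\to-1^-$: the singular set of $U_{BE,t}$ is the sphere $\pa B_1(t\,\ee)$, which becomes tangent to $\pa H$ at the origin. A change of variables concentrating near the tangency point shows that $U_{BE,t}$ converges (after rescaling) to a one-dimensional profile $u=u(x_1)$ for which the chain of inequalities giving \eqref{global lower bound} becomes an equality, hence $G_{BE}(t)-T_{BE}(t)^{\psharp}/\psharp\to 0$. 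Differentiability at $T_E$ follows because both the Sobolev curve as $t\to-\infty$ and the BE curve as $t\to-\infty$ converge (in a suitable rescaled sense) to the Escobar family---the singular term $|y|^{p'}$ dominates the constants $\pm1$ in $U_S$ and $U_{BE}$---producing matching one-sided limits of $\Phi'$.

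Finally, for convexity and concavity I would use $\Phi'(T)=G_*'(t)/T_*'(t)$ with the relevant subscript $*\in\{S,BE\}$ and compute
\[
\Phi''(T)\,T_*'(t)^{3}=G_*''(t)\,T_*'(t)-G_*'(t)\,T_*''(t).
\]
Strict convexity on $(T_0,T_E)$ and on $(T_E,+\infty)$ amounts to showing that this expression is positive on the Sobolev range $t<0$ and the BE range $t<-1$; I would attempt this by reducing $G_*$ and $T_*$ to one-dimensional Beta-type integrals via polar coordinates (the integrands of $G_*$, $T_*$, $G_*'$, $T_*'$ are all explicit in $t$) and then checking a single sign inequality. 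Convexity would then extend across $T=T_E$ thanks to the continuity of $\Phi'$ established above. For concavity near $0$, I would expand $G_S(t)$ and $T_S(t)$ as $t\to+\infty$, where $U_{S,t}$ is centered deep inside $H$ and the boundary integrals are computed by expanding $U_S$ at distance $\sim t$, yielding an expansion $\Phi(T)=S-c_1\,T^{\gamma_1}+O(T^{\gamma_2})$ with $c_1>0$ and $\gamma_2>\gamma_1>1$, forcing strict concavity in a right neighborhood of $0$. The main obstacle is the convexity computation: the sign of $G_*''\,T_*'-G_*'\,T_*''$ is not readily visible, and the relevant one-dimensional integrals must be manipulated delicately to obtain it; by contrast, the concavity near $0$ is a routine expansion once the leading powers have been identified.
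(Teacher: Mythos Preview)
Your overall strategy---reducing to the smooth parametric curves $(T_S,G_S)$ and $(T_{BE},G_{BE})$, invoking the Euler--Lagrange identity $\Phi(T)^p=\alpha+\beta T^{\psharp}$ and the derivative formula $\Phi'(T)=(\psharp/p)\,\beta\,T^{\psharp-1}\Phi(T)^{1-p}$, and reading off monotonicity from the sign of $\beta$---matches the paper's treatment of differentiability and monotonicity (their Lemma~\ref{lem: derivative of Phi} and Step~1). Your plan for differentiability at $T_E$ and for the asymptotic $\Phi(T)=T^{\psharp}/\psharp+o(1)$ is somewhat less concrete than the paper's, which computes the one-sided limits of $G'/T'$ directly and, for the asymptotics, uses the \emph{equality case} of the mother inequality for $U_{BE,t}$ (the identity \eqref{idv}) together with the pointwise estimates $Y_{BE}(t)\to 1$, $\int_H U_{BE,t}^{\psharp}\to 0$, $G_{BE}(t)\le C|t+1|^{-(n-1)/2n}$ as $t\to-1^-$; but your blow-up heuristic is aiming at the same facts.

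The place where your plan diverges---and where you yourself flag the ``main obstacle''---is the convexity on $(T_0,\infty)$. You propose to check the sign of $G_*''\,T_*'-G_*'\,T_*''$ by reducing to explicit Beta-type integrals; this looks laborious and its feasibility is unclear. The paper avoids this entirely: from the mother inequality \eqref{eqn: consequence of mother BE} one has, for each fixed $t<-1$,
\[
\Phi(T)\ \ge\ G_{BE}(t)+t\,\frac{T_{BE}(t)^{\psharp}-T^{\psharp}}{\psharp\,Y_{BE}(t)}\qquad\forall\,T>0,
\]
with equality exactly at $T=T_{BE}(t)$. Since $t<0$, the right-hand side is a \emph{convex} function of $T$, so on $(T_E,\infty)$ the function $\Phi$ is a pointwise supremum of convex functions and therefore convex; the same argument with \eqref{eqn: consequence of mother} for $t<0$ gives convexity on $(T_0,T_E)$, and the two pieces glue because the Escobar line $G=ET$ supports $\Phi$ at $T_E$. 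This ``envelope of convex supports'' argument comes for free from the transport inequality and bypasses all second-derivative computations. For the concavity near $0$, the paper does not do an asymptotic expansion as $t\to+\infty$ either: it differentiates the formula for $\Phi'(T_S(t))$ once more in $t$ and reduces the sign question to an elementary inequality for $h(t)=\int_{\{x_1=t\}}U_S^{\psharp}$, obtaining concavity on $(0,T_*)$ with the explicit $T_*=T_S\big((\pstar)^{1/p'}\big)$.
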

We see from \eqref{asymptotic behavior} that the lower bound \eqref{global lower bound} is saturated asymptotically as $T \to \infty.$
A simple but interesting corollary of the characterization result obtained in Theorem \ref{thm: main} is the following comparison theorem, which is somehow complementary to \eqref{Phi maggivillani}, the main result in \cite{MV2005}.

\begin{corollary}[Half-spaces have the best Sobolev inequalities]\label{corollary bestbest} If $\Omega$ is a non-empty open set with Lipschitz boundary on $\R^n$, then
\[
\Phi_\Omega(T) \leq \Phi(T) \qquad\forall T\ge0\,.
\]
\end{corollary}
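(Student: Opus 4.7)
The plan is to test $\Phi_\Omega(T)$ against rescaled copies of the explicit minimizers on $H$ produced in Theorem \ref{thm: main}, concentrated near a ``flat'' point of $\pa\Omega$. By Rademacher's theorem, the Lipschitz surface $\pa\Omega$ admits a tangent hyperplane at $\mathcal{H}^{n-1}$-a.e.\ point; fix such a point and, up to a rigid motion, assume it is the origin with $\pa H$ as tangent hyperplane and outer normal $-\ee$, so that locally $\Omega=\{x_1>\varphi(x')\}$ with $\varphi(0)=0$ and $\varphi(x')=o(|x'|)$.

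For $\lambda>1$ and $\tau$ in the parameter range of the family from Theorem \ref{thm: main} appropriate to $T$ (Sobolev if $T<T_E$, beyond-Escobar if $T>T_E$), set $v_{\lambda,\tau}(x)=\lambda^{(n-p)/p}\,u^*_\tau(\lambda x)$ on $\Omega$. In the beyond-Escobar case the excluded ball $B_{1/\lambda}(\tau\ee/\lambda)$ has top at $x_1=(\tau+1)/\lambda<0$ (using $\tau<-1$), so the $o(|x'|)$ flatness of $\varphi$ makes it disjoint from $\Omega$ for $\lambda$ large. Changing variables $y=\lambda x$, the $L^\pstar(\Omega)$, $L^\psharp(\pa\Omega)$, and $L^p(\Omega)$-norms of $v_{\lambda,\tau}$ and $\nabla v_{\lambda,\tau}$ become the integrals of the corresponding densities of $u^*_\tau$ over $\lambda\Omega$ and $\pa(\lambda\Omega)$. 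Since $\lambda\Omega$ converges locally to $H$, the singularities of $u^*_\tau$ lie in $\{y_1\le 0\}$, and $u^*_\tau$ has polynomial decay at infinity, dominated convergence yields, locally uniformly in $\tau$,
\[
\|v_{\lambda,\tau}\|_{L^\pstar(\Omega)}\to 1\,,\quad \|v_{\lambda,\tau}\|_{L^\psharp(\pa\Omega)}\to T(\tau)\,,\quad \|\nabla v_{\lambda,\tau}\|_{L^p(\Omega)}\to \Phi(T(\tau))\,.
\]

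To hit $T$ exactly, renormalize $\tilde v_{\lambda,\tau}=v_{\lambda,\tau}/\|v_{\lambda,\tau}\|_{L^\pstar(\Omega)}$ and denote its trace and gradient ratios by $T_{\lambda,\tau}$ and $G_{\lambda,\tau}$, which converge to $T(\tau)$ and $\Phi(T(\tau))$ respectively. For $T\in(0,T_E)$, let $\tau^*$ be the unique solution of $T_S(\tau^*)=T$ (Theorem \ref{thm: main}(i)); since $T_S$ is continuous and strictly monotone it takes values on both sides of $T$ near $\tau^*$, and locally uniform convergence transfers this property to $T_{\lambda,\cdot}$ for $\lambda$ large. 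The intermediate value theorem then produces $\tau_\lambda\to\tau^*$ with $T_{\lambda,\tau_\lambda}=T$, so that $\tilde v_{\lambda,\tau_\lambda}$ is admissible in $\Phi_\Omega(T)$ and $\Phi_\Omega(T)\le G_{\lambda,\tau_\lambda}\to \Phi(T)$. The range $T>T_E$ is handled identically via Theorem \ref{thm: main}(iii); the case $T=0$ by concentrating a Sobolev minimizer at an interior point of $\Omega$ (whose trace vanishes in the limit); and $T=T_E$ by letting $T\to T_E^\pm$ and invoking lower semi-continuity of $\Phi_\Omega$ at $T_E$.

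The most delicate point will be verifying the trace-norm convergence, since $\pa(\lambda\Omega)$ flattens to $\pa H$ only locally near the origin, while far from the origin it is the rescaled (possibly unbounded) Lipschitz surface $\lambda\pa\Omega$. Splitting the trace integral at a large radius, the local part is controlled by the $o(|x'|)$ flatness of $\varphi$ and the continuity of $u^*_\tau$, while the far-field vanishes thanks to the decay $|u^*_\tau(y)|\lesssim |y|^{-(n-p)/(p-1)}$ and the uniform Lipschitz growth of the surface measure of $\pa\Omega$.
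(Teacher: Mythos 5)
Your approach is the same as the paper's at the conceptual level: concentrate a rescaled half-space minimizer at a ``flat'' boundary point of $\Omega$ and pass to the limit. The paper picks $x_0$ in the reduced boundary (rather than a Rademacher point) and uses De Giorgi's blow-up convergence, but for a Lipschitz set these viewpoints coincide a.e.\ and both provide exactly the local flatness you invoke.

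There are, however, two genuine gaps in your write-up, both sidestepped by the paper via a device you omit: a \emph{compact cutoff} $\psi_R\,v_T$ applied to the test function \emph{before} rescaling. First, without the cutoff your trace norm is $\int_{\lambda\pa\Omega}(u^*_\tau)^{\psharp}\,d\H^{n-1}$, and the far-field piece $\lambda\pa\Omega\setminus B_R$ is controlled only through what you call ``the uniform Lipschitz growth of the surface measure of $\pa\Omega$,'' i.e.\ a bound $\H^{n-1}(\pa\Omega\cap B_\rho)\lesssim\rho^{n-1}$. This is not part of the hypothesis ``open set with Lipschitz boundary'' (it already fails for natural unbounded open sets with locally Lipschitz boundary), and the paper's class is even broader, since the argument there only uses that $\Omega$ has locally finite perimeter. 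With the cutoff, $(\psi_R v_T)^{\psharp}\in C^0_c(\R^n)$, so the trace limit follows directly from the weak-$*$ convergence $\H^{n-1}\llcorner\pa\Omega_r\rightharpoonup^{*}\H^{n-1}\llcorner\pa H$ with no decay or growth estimates at all, and the small loss from the cutoff is sent to zero by letting $R\to\infty$ afterwards. Second, your treatment of $T=T_E$ rests on lower semicontinuity of $\Phi_\Omega$ at $T_E$, which is not established and is not immediate for a constrained infimum (minimizing sequences can concentrate and lose the constraints in the limit). The paper treats $T=T_E$ on the same footing as the other cases by using $U_{E,-1}$ directly, and replaces your intermediate-value adjustment in $\tau$ by adding a small $\xi_r\in C^\infty_c(\Omega)$ that tweaks the $L^{\pstar}(\Omega)$-norm without changing the trace; this device works uniformly for all $T>0$, whereas your IVT-in-$\tau$ argument cannot hit $T_E$ because $T_S(\R)=(0,T_E)$ and $T_{BE}((-\infty,-1))=(T_E,\infty)$ are both open intervals not containing $T_E$. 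Both issues are repairable, but as written the proof does not cover the stated generality.
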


As it may be expected, the proof of Theorem \ref{thm: main} is based on a mass transportation argument in the spirit of \cite{cenv}. As we give more details on this point at the beginning of Section \ref{sec: OT}, we now comment on the meaning of these theorems in the geometrically relevant cases $p=2$ and $p=1$.

\subsection{The special case $p=2$} In this case, which implicitly requires $n\ge 3$, \eqref{phi p T} can be reformulated as a family of minimization problems {\it on conformally flat metrics on $H$},
\begin{equation}\label{PsiP}
  \Psi(P)=\inf\Big\{\int_H\,R_u\,d\vol_u+2\,(n-1)\int_{\pa H} h_u\,d\s_u:\vol_u(H)=1\,,P_u(H)=P\Big\}\qquad P\ge 0\,,
\end{equation}
which is related to the Yamabe problem on manifolds with boundary studied in the classical papers \cite{Escobar1988,Escobar92}. Here, we view $H$ as a conformally flat Riemannian manifold with boundary, endowed with the metric $u^{4/(n-2)}\,dx$. The volume and perimeter of a set $\Om\subset H$ with respect to this metric are computed as
\begin{equation}
  \label{volume and perimeter}
  \vol_u(\Om)=\int_\Om\,u^{2^\star}\,dx,\qquad P_u(\Om)=\int_{\pa\Om}\,u^{2^\sharp}\,d\H^{n-1}\,,
\end{equation}
while $R_u(x)$ and $h_u(x)$ stand, respectively, for the scalar curvature of $(H,u^{4/(n-2)}\,dx)$ at $x\in H$, and the mean curvature of $\pa H$ in $(H,u^{4/(n-2)}\,dx)$ at $x\in\pa H$ computed with respect to the outer unit normal $\nu_H$ to $H$. Explicitly,
\begin{equation}
  \label{scalar curvature and mean curvature}
  R_u=-\frac{4(n-1)}{n-2}\,\frac{\Delta u}{u^{(n+2)/(n-2)}}\,,\qquad
  h_u=-\frac{2}{n-2}\,\frac1{u^{n/(n-2)}}\,\frac{\pa u}{\pa x_1}\,.
\end{equation}
An integration by parts thus gives
\begin{eqnarray*}
  \int_{H}|\nabla u|^2
  &=&-\int_H\,u\,\Delta u-\int_{\pa H} u\,\frac{\pa u}{\pa x_1}
  \\
  &=&\frac{n-2}{4(n-1)}\int_H\,R_u\,d\vol_u+\frac{n-2}2\int_{\pa H} h_u\,d\s_u\,.
\end{eqnarray*}
In this way, we see the equivalence of the problems \eqref{phi p T} when $p=2$ and \eqref{PsiP} through the identities
\begin{equation*}
  \label{conformal interpretation}
  \Phi^{(2)}(T)=\Big(\frac{n-2}{4(n-1)}\Big)^{1/2}\,\Psi(T^{2^\sharp})^{1/2}
\qquad
\Psi(P)=\frac{4(n-1)}{n-2}\,\Phi^{(2)}(P^{1/2^\sharp})^2\,,
\end{equation*}
A standard argument shows that if $u$ is a positive minimizer for $\Phi(T)$ (with a generic $p\in(1,n)$), then there exist $\lambda,\s\in\R$ such that
\[
\begin{cases}
 -\Delta_p u = \lambda u^{\pstar-1} & \text{ in } \Ha\\
 -|\n u |^{p-2} \partial_{x_1} u = \sigma u^{\psharp - 1} & \text{ on } \pa \Ha \,.
 \end{cases}
\]
This basic fact, applied with $p=2$, implies that every minimizer in the variational problem \eqref{PsiP} is a conformally flat metric on $H$ with constant {\it scalar} curvature and with boundary of constant mean curvature. By \cite[Theorem 3.1]{carlenloss}, or with an alternative proof, by Theorem \ref{thm: main} with $p=2$, every minimizer actually has constant {\it sectional} curvature. Indeed, as a by-product of the characterization of minimizers of $\{\Phi(T)\}_{T\ge0}$, we deduce that, as $P$ increases from $0$ to $P_E=T_E^{2^\sharp}$, minimizing metrics in \eqref{PsiP} correspond to spherical caps of decreasing radii rescaled to unit volume. Their sectional curvature will be constant and positive along the way, while the constant mean curvature of the boundaries will initially be negative and then change sign in correspondence to hemispheres ($P=P_0=T_0^{2^\sharp}$). Then, as $P$ increases from $P_E$ to $+\infty$, minimizing metrics in \eqref{PsiP} correspond to suitable sections of the hyperbolic space, all with constant negative sectional curvature and constant positive mean curvature of the boundary. Thus, we have a transition from spherical to hyperbolic geometry along minimizing metrics in \eqref{PsiP}. These results are summarized in the following statement:

\begin{theorem}[Theorem 3.1 in \cite{carlenloss} or Theorem \ref{thm: main} with $p=2$]\label{thm: p=2}
For each $P >0$, a minimizing conformal metric $g_P$ exists in \eqref{PsiP} and is given,
uniquely up to dilations and translations orthogonal to $\ee$,  by
\begin{eqnarray*}
U_{S,t}^{4/(n-2)}\, dx & \quad \text{ for some }t \in \R &\qquad \text{ if }P \in (0, P_E)\,,\\
U_{E, t}^{4/(n-2)}\, dx & \quad \text{ for any }t <0& \qquad \text{ if }P =P_E\,,\\
U_{BE,t}^{4/(n-2)}\, dx &\quad  \text{ for some }t <-1 &\qquad \text{ if }P \in (P_E, \infty)\,.
\end{eqnarray*}

For $P \in (0, P_E)$, $(H, g_P)$ is isometric to a spherical cap $(\Sigma, g)$ with the standard metric induced by the embedding $S^n \hookrightarrow \R^{n+1}$ whose radius is determined by $P$; consequently, it has constant positive sectional curvature. The mean curvature of $\pa H$ is constant and negative for $0 <P<P_0 = T_0^{2^\sharp}$ and is constant and positive for $P_0 < P <P_E$.

For $P=P_E$, $(H, g_P)$ has zero sectional curvature and constant positive mean curvature of $\pa H$.

For $P \in (P_E, \infty)$,  $(H, g_P)$ has constant negative sectional curvature and is therefore a model for hyperbolic space. The mean curvature of $\pa H$ is constant and positive.
\end{theorem}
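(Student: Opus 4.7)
My plan is to derive this theorem as a geometric reformulation of Theorem~\ref{thm: main} specialized to $p=2$. First, I would invoke the identity $\Phi^{(2)}(T)=\bigl(\tfrac{n-2}{4(n-1)}\bigr)^{1/2}\,\Psi(T^{2^\sharp})^{1/2}$ derived immediately before the statement, which puts minimizers of $\Psi(P)$ and of $\Phi^{(2)}(T)$ into bijection through $P=T^{2^\sharp}$. Existence and uniqueness (up to dilations and horizontal translations) of a minimizer $g_P$ for each $P>0$, together with its identification as $U_{S,t}^{4/(n-2)}\,dx$ for $P\in(0,P_E)$, as $U_{E,t}^{4/(n-2)}\,dx$ for $P=P_E$, and as $U_{BE,t}^{4/(n-2)}\,dx$ for $P>P_E$, then follow directly from Theorem~\ref{thm: main}, with $P_E=T_E^{2^\sharp}$ and $P_0=T_0^{2^\sharp}$.

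Next, for each of the three families I would carry out the explicit geometric identification with a model space of constant sectional curvature. For $p=2$ the generators simplify to $U_S(x)=(1+|x|^2)^{-(n-2)/2}$, $U_E(x)=|x|^{-(n-2)}$, and $U_{BE}(x)=(|x|^2-1)^{-(n-2)/2}$. In the Sobolev case, $U_{S,t}^{4/(n-2)}\,dx$ is proportional to $(1+|x-t\,\ee|^2)^{-2}\,dx$, which is the classical pullback of the round metric on $S^n\subset\R^{n+1}$ under inverse stereographic projection centered at $t\,\ee$; hence $(H,g_P)$ is isometric to the spherical cap cut out by the image of $\pa H$, with constant positive sectional curvature and radius determined by $t$. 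In the Escobar case, $U_{E,t}^{4/(n-2)}\,dx$ is proportional to $|x-t\,\ee|^{-4}\,dx$, and the inversion $x\mapsto(x-t\,\ee)/|x-t\,\ee|^2$ is an isometry from this metric onto a Euclidean ball carrying the flat metric (the ball being the image of $H$ minus the image of infinity); since $\pa H$ does not pass through the center $t\,\ee$, it is mapped to a round sphere, so $(H,g_P)$ is flat and $\pa H$ has positive constant mean curvature. In the Beyond-Escobar case, $U_{BE,t}^{4/(n-2)}\,dx$ is proportional to $(|x-t\,\ee|^2-1)^{-2}\,dx$ on the exterior of the unit sphere about $t\,\ee$; the same inversion conjugates it to a constant multiple of the Poincar\'e ball-model metric $(1-|z|^2)^{-2}\,dz$ on a subdomain of the unit ball, so $(H,g_P)$ is isometric to a region of hyperbolic space of constant negative sectional curvature, and the image of $\pa H$ (a Euclidean sphere strictly inside $B_1$ because $|t|>1$) is a totally umbilic hypersurface of positive constant hyperbolic mean curvature.

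Finally, I would pin down the sign transition of mean curvature in the Sobolev regime. Using $h_u=-\tfrac{2}{n-2}\,u^{-n/(n-2)}\,\pa_{x_1}u$ from \eqref{scalar curvature and mean curvature}, a direct differentiation of $U_{S,t}(x)=c_t^{-1}(1+|x-t\,\ee|^2)^{-(n-2)/2}$ on $\{x_1=0\}$ shows that $\pa_{x_1}U_{S,t}|_{\pa H}$ is proportional to $t$, so $h_u$ has the opposite sign; analogous computations for $U_{E,t}$ with $t<0$ and $U_{BE,t}$ with $t<-1$ both yield $h_u>0$. Combining these with Theorem~\ref{thm: main}(i), which gives $T_S$ strictly decreasing on $\R$ with $T_S(0)=T_0$ so that $t>0$ for $T\in(0,T_0)$ and $t<0$ for $T\in(T_0,T_E)$, produces the stated sign change at $P=P_0$. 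The principal technical point in this plan is the geometric identification in the middle step: one must carefully track the normalizing constants $c_t$ through the inversion or stereographic map to verify that each identification is a genuine isometry---rather than merely a conformal equivalence---onto the claimed subdomain of sphere, Euclidean space, or Poincar\'e ball. Everything else is either quoted from Theorem~\ref{thm: main} or a routine differentiation.
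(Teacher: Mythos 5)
Your proposal is correct and follows essentially the same route as the paper: reduce $\Psi(P)$ to $\Phi^{(2)}(T)$ via $P=T^{2^\sharp}$, quote Theorem~\ref{thm: main} for existence/uniqueness and the three-family description, then identify the conformal metrics geometrically. The paper does not actually write out this argument---it presents Theorem~\ref{thm: p=2} as an immediate by-product of Theorem~\ref{thm: main} (or \cite[Theorem 3.1]{carlenloss}) and sketches the geometric content in the preceding paragraph; you have merely filled in the omitted details (the stereographic-projection isometry for $U_{S,t}$, the inversion taking $|x-t\,\ee|^{-4}\delta$ to the flat metric for $U_{E,t}$, the inversion conjugating $(|x-t\,\ee|^{2}-1)^{-2}\delta$ to the Poincar\'e ball metric for $U_{BE,t}$, and the sign computation of $\pa_{x_1}U_{S,t}\big|_{\pa H}\propto t$ combined with $T_S$ decreasing and $T_S(0)=T_0$ to locate the mean-curvature sign change at $P_0$). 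One small imprecision: in the Beyond-Escobar case the image of $\pa H$ under $x\mapsto(x-t\,\ee)/|x-t\,\ee|^{2}$ is a sphere through the origin contained in $B_{1/|t|}$, so it is indeed at positive distance from $\pa B_1$, but it touches the origin; your phrase ``strictly inside $B_1$'' is defensible but could be read as asserting a positive distance from the origin as well, which is false. This has no bearing on the conclusion, since constant positive hyperbolic mean curvature follows from the sphere being totally umbilic and interior to the ball regardless.
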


\subsection{The special case $p=1$} In this case, the minimization in \eqref{phi p T} takes place in the class of those $u\in L^1_{{\rm loc}}(H)$, vanishing at infinity, and whose distributional gradient $Du$ is a measure on $H$ with finite total variation, $|Du|(H)<\infty$. We thus consider the problems
\begin{eqnarray}
  \label{phi 1 T}
  \Phi(T)=\inf\Big\{|D u|(H):\|u\|_{L^{n/(n-1)}(H)}=1\,,\|u\|_{L^1(\pa H)}=T\Big\}\qquad T\ge0\,.
\end{eqnarray}
In the restricted class of characteristic functions $u=1_X$ for $X\subset H$, this is the relative isoperimetric problem in $H$ with an additional constraint (aside from the unit volume constraint) on the contact region between the boundary of $X$ and the boundary of $H$. In the notation of distributional perimeters, this restricted problem takes the form
\begin{equation}
  \label{phi 1 T sets}
  \Phi_{{\rm sets}}(T)=\inf\Big\{P(X;H):X\subset H\,, |X|=1\,,P(X;\pa H)=T\Big\}\qquad T\ge0\,,
\end{equation}
where $P(X;A)=\H^{n-1}(A\cap\pa X)$ whenever $X$ is an open set with Lipschitz boundary. The unique minimizers in \eqref{phi 1 T sets} are obtained by intersecting $H$ with balls (of suitable radius and centered at suitable distance from $\pa H$); see, e.g., \cite[Theorem 19.15]{maggiBOOK}, which also describes the relevance of \eqref{phi 1 T sets} in capillarity theory. In the original problem \eqref{phi 1 T}, one obtains scaled versions of the characteristic functions of these sets as minimizers; precisely, $u$ is a minimizer in \eqref{phi 1 T} if and only if $u(x)=\lambda^{n-1}\,1_X(\lambda\,x)$ for some $\lambda>0$ and $X$ a minimizer in \eqref{phi 1 T sets}. When $T=0$, \eqref{phi 1 T sets} is simply the Euclidean isoperimetric problem, and \eqref{phi 1 T} is the Sobolev inequality on functions of bounded variation. Notice that the Escobar inequality, in the case $p=1$, takes the simple form
\begin{equation}
  \label{escobar p1}
  |Du|(H)\ge\|u\|_{L^1(\pa H)}
\end{equation}
or, in more geometric terms, that is, for $u=1_X$ with $X\subset H$,
\[
P(X;H)\ge P(X;\pa H)\,.
\]
Along the lines of \eqref{global lower bound}, this follows by simply applying the divergence theorem on $X$ to the constant vector field $T(x)=\ee$ to get
\[
0=\int_X\Div(\ee)=\int_{H\cap\pa X}\nu_X\cdot \ee+\int_{\pa H\cap\pa X}(-\ee)\cdot \ee<P(X;H)-P(X;\pa H)
\]
where the inequality is strict as soon as $|X|>0$. The proof of \eqref{escobar p1} is analogous, and in particular, there is no equality case in \eqref{escobar p1} (i.e., a non-zero function realizing equality). In the case $p=1,$ Theorems~\ref{thm: main} and Theorem~\ref{thm: properties of Phi} take the following form.

\begin{theorem}\label{thm: p=1}
	For every $n\ge2$ and $T>0$ there exists a minimizer in \eqref{phi 1 T}, which is given, uniquely up to dilations and translations orthogonal to $\ee$, by
\[
U_{S,t}(x)=\frac{1_{B_1}(x-t\,\ee)}{\|1_{B_1}(\cdot-t\,\ee)\|_{L^{n'}(H)}}\qquad x\in H
\]
for some $t\in(-1,1)$.
 The function $\Phi(T)$ defined by \eqref{phi 1 T} is a smooth function of $T>0$ given by the parametric curve
\[
	\Phi(T_S(t))  =G_S(t)  \qquad -1<t<-1\,,
\]
where $T_S(t)=\|U_{S,t}\|_{L^1(\pa H)}$ and $G_S(t)=|DU_{S,t}|(H)$. If we set $T_0=T_S(0)$, then $\Phi(T)$ is strictly decreasing on $(0,T_0)$ and strictly increasing on $(T_0,\infty)$, with $\Phi(0)=\ISO(B_1)$ and $\Phi(T_0) =2^{-1/n}\ISO(B_1)$. Moreover, $\Phi$ is strictly convex on $(T_0, \infty)$, there exists $T_*\in(0,T_0)$ such that $\Phi(T)$ is strictly concave on $(0, T_*)$,  and $\Phi(T) = T + o(1)$ as $T \to \infty$.
\end{theorem}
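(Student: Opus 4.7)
The plan is to reduce the BV minimization \eqref{phi 1 T} to the relative isoperimetric problem with prescribed contact \eqref{phi 1 T sets}, whose solution is classical, and then extract the stated properties of $\Phi$ from the resulting parametric representation. This replaces the optimal-transport argument used elsewhere in the paper for $p \in (1, n)$, exploiting instead the direct equivalence between $p=1$ Sobolev-type and isoperimetric-type inequalities.

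\emph{Coarea reduction and set minimizers.} For a non-negative admissible $u$, the coarea formula yields
\[
|Du|(H) = \int_0^\infty P(\{u>s\};H)\,ds\,, \qquad \|u\|_{L^1(\pa H)} = \int_0^\infty \H^{n-1}(\{u>s\}\cap \pa H)\,ds\,,
\]
while the layer-cake identity rewrites $\|u\|_{L^{n'}(H)}^{n'}$ as an integral in $v(s)=|\{u>s\}\cap H|$. Applying the scale-invariant form of \eqref{phi 1 T sets} pointwise in $s$ bounds $|Du|(H)$ from below in terms of the profiles $(v(s), \tau(s))$ with $\tau(s)=\H^{n-1}(\{u>s\}\cap \pa H)$. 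By \cite[Theorem 19.15]{maggiBOOK}, the minimizers of \eqref{phi 1 T sets} are, up to translations orthogonal to $\ee$, uniquely of the form $B_r(t\ee)\cap H$ with $t\in(-r,r)$, and after the normalization $|X|=1$ are parametrized by $t\in(-1,1)$ alone.

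\emph{Rigidity and the form of $u$.} The main technical step is to show that saturation of the coarea lower bound forces $u = \alpha\, 1_X$ for a single $X = B_r(t\ee)\cap H$. Level-set rigidity in \eqref{phi 1 T sets} guarantees that a.e.\ super-level set is a ball-intersection with $H$; the nested monotone structure of super-level sets of $u$, combined with a convexity argument ruling out genuinely layered competitors (say $\alpha_1 1_{X_1} + \alpha_2 1_{X_2}$ with $X_1\subsetneq X_2$), then forces all these ball-intersections to be homothetic about a common axis. This is the $p=1$ counterpart of the mass-transport rigidity used for $p\in(1,n)$ and is likely the main obstacle: the layer-wise inequality holds with equality for many competitors, and only the coupling across levels enforced by the constraints produces concentration on a single level.

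\emph{Parametric properties of $\Phi$.} Once minimizers are identified with $\{U_{S,t}\}_{t\in(-1,1)}$, both $T_S(t)$ and $G_S(t)$ reduce to elementary (real-analytic) integrals over the spherical cap $B_1(t\ee)\cap H$. Strict monotonicity $T_S'(t)<0$ on $(-1,1)$ is immediate from the geometry (shifting the ball further into $H$ shrinks the contact disk while enlarging the in-$H$ volume), so $T_S:(-1,1)\to(0,\infty)$ is a smooth bijection and $\Phi=G_S\circ T_S^{-1}$ is smooth on $(0,\infty)$. The boundary values $\Phi(0)=\ISO(B_1)$ (at $t\to 1^-$) and $\Phi(T_0)=2^{-1/n}\ISO(B_1)$ (hemisphere, $t=0$) are direct computations; strict monotonicity of $G_S$ on $(-1,0)$ and $(0,1)$ with critical point at $t=0$ (by the symmetry of the hemisphere) yields the decrease/increase of $\Phi$ about $T_0$. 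Strict convexity on $(T_0,\infty)$ and concavity on some $(0,T_*)$ follow by computing $\Phi''$ through the parametrization; the asymptotic $\Phi(T)=T+o(1)$ as $T\to\infty$ corresponds to $t\to -1^+$, where the ball slides almost entirely out of $H$ and the curved cap area differs from the flat contact-disk area only at higher order in the cap height, so $G_S-T_S\to 0$ even as both diverge.
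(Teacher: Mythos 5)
Your route is genuinely different from the paper's. The paper proves the $p=1$ theorem by running the \emph{same} mass-transportation machinery as for $p\in(1,n)$: the mother inequality of Theorem~\ref{thm: mother} and its equality characterization (Theorem~\ref{thm: equality cases}) are stated for all $p\in[1,n)$, and the proof for $p=1$ invokes these directly, together with the identity $Y(t,U_{S,t})=1$ and the relation $G_S(t)=n\int_H U_{S,t}-t\,T_S(t)$ coming from equality in the mother inequality. (The equality case for $p=1$ in the appendix relies on the rigidity result of Figalli--Maggi--Pratelli for the anisotropic isoperimetric inequality, plus \cite[Exercise~15.19]{maggiBOOK}.) You instead propose to bypass optimal transport entirely via the coarea formula and the classical capillarity result \cite[Theorem~19.15]{maggiBOOK}. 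This is an appealing and more elementary-looking strategy for $p=1$, and the paper itself notes in the introduction that the BV minimizers must be (scaled) characteristic functions of minimizers of \eqref{phi 1 T sets}; but that statement is presented as a \emph{consequence} of Theorem~\ref{thm: p=1}, not a tool for proving it.

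The real gap in your argument is precisely the step you yourself flag as ``likely the main obstacle.'' The coarea formula converts \eqref{phi 1 T} into minimizing
\[
  \int_0^\infty v(s)^{(n-1)/n}\,\Phi_{\rm sets}\!\left(\frac{\tau(s)}{v(s)^{(n-1)/n}}\right)ds
\]
over decreasing profiles $v(s)=|\{u>s\}|$, $\tau(s)=\H^{n-1}(\{u>s\}\cap\pa H)$ subject to $\int n's^{n'-1}v(s)\,ds=1$ and $\int\tau(s)\,ds=T$. In the standard $T=0$ case (no trace constraint), $\Phi_{\rm sets}\equiv\ISO(B_1)$ is a constant and concentration on a single level follows by an easy concavity argument in $v$ alone. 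With the extra integral constraint on $\tau$, one is minimizing a genuinely two-dimensional profile functional; the weight $\Phi_{\rm sets}$ is, by the very theorem you are trying to prove, \emph{concave near $0$ and convex far from $0$}, so a naive Jensen/convexity argument to establish concentration is not available, and the assertion that ``only the coupling across levels enforced by the constraints produces concentration on a single level'' has no content until such a lemma is stated and proven. Similarly, the claim that ``strict monotonicity $T_S'(t)<0$ is immediate from the geometry'' is heuristic: moving the ball inward enlarges $|B_1(t\ee)\cap H|$ \emph{and} changes the contact disk, and both enter the normalization, so one still needs a computation (the paper obtains it as a nonobvious consequence of the mother family). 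Until the level-concentration lemma is proven, the proposal reduces the BV problem to the set problem by assertion, not by argument.
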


We note that in the case $p=1$, we have a single minimizing family, corresponding to the Sobolev family of the case $p\in(1,n)$, but no Escobar or beyond-Escobar families. This is a reflection of the fact that
\[
\lim_{p\to 1^+} T_E(n,p) = \infty\,,
\]
proven in Proposition \ref{prop TEnp} below.
This fact indicates that no analogues of the Escobar or beyond-Escobar families exist for $p=1$. In the same vein, one notices that the $\Phi$ curve asymptotically has the same slope (equal to $1$) as the (limit position as $p\to 1^+$ of the) Escobar line.

\subsection{Organization of the paper}
In Section~\ref{sec: OT}, we use a mass transportation argument to prove a family of inequalities which will serve as a key tool for proving the main results. In Section~\ref{sec: Phi}, we prove Theorems~\ref{thm: main}, \ref{thm: properties of Phi}, and \ref{thm: p=1}. Finally, in Appendix~\ref{sec: appendix}, we address some technical points related to the mass transportation argument.

\medskip

\noindent{\bf Acknowledgments.} We thank Eric Carlen and Michael Loss for their advice concerning \cite{carlenloss}. RN supported by the NSF Graduate Research Fellowship under Grant DGE-1110007. FM supported by the NSF Grants DMS-1265910 and DMS-1361122.

\section{Mass transportation argument}\label{sec: OT}
The starting point of our analysis is the mass transportation proof of the Sobolev inequality from \cite{cenv}. This argument, whose origin can be traced back to \cite{knothe,Gromov}, was exploited in \cite{MV2005} to prove
a parameterized ``mother family'' of trace Sobolev inequalities on arbitrary Lipschitz domains, leading to the sharp comparison theorem stated in \eqref{Phi maggivillani}. In \cite{Nazaret2006}, this method of proof is adapted to obtain the Escobar inequality for every $p\in(1,n)$. It is important to mention that, as already shown in \cite{cenv} (see also \cite{agk,MVtwo2008,Nguyen15}), this optimal transportation argument can also be applied to a very interesting special family of Gagliardo--Nirenberg inequalities, having some Faber-Krahn and log-Sobolev inequalities as limit cases.

At the core of this paper is a new iteration of this by-now-classical mass transportation argument. This iteration lies in between the ones of \cite{MV2005} and \cite{Nazaret2006}. In Theorem \ref{thm: mother} we implement the same trick introduced in \cite{Nazaret2006}, namely subtracting a unit vector from the Brenier map, but with the seemingly harmless addition of an intensity parameter $t$. (To be precise, the argument in \cite{Nazaret2006} corresponds to the choice $t=-1$ in the proof of Theorem \ref{thm: mother}.) This simple expedient leads to obtain a new parameterized ``mother family'' of trace-Sobolev inequalities on the half-space, whose equality cases (see Theorem \ref{thm: equality cases} below) are given by the functions $U_{S,t}$, $U_{E,t}$ and $U_{BE,t}$ introduced in \eqref{def of USt}, \eqref{def of UEt} and \eqref{def of UBEt}. This means that each inequality in the mother family provides a sharp trace-Sobolev bound, which thus agrees with $\Phi(T)$ for a specific value of $T$ depending on $t$. By adopting the same point of view of \cite{MV2005}, where the $\Phi$-function of the ball was computed for a special range of $T$, in Section \ref{sec: Phi} we exploit this implicit description of $\Phi(T)$ in order to prove Theorem \ref{thm: main}.

Let us now recall some facts from the theory of optimal transportation. Given a (Borel regular) probability measure $\mu $ on $\R^n$ and a Borel measurable map $T:\R^n\to \R^n$, the {\it push-forward of $\mu$ through $T$} is the probability measure defined by
\[
T \# \mu (A) = \mu (T^{-1} (A)) \qquad \forall A \subset \R^n.
\]
As a consequence of this definition, for every Borel measurable function $\xi:\R^n\to[0,\infty]$ we have
\begin{equation}\label{eqn: integral push forward}
\int_{\R^n} \xi\, d T\#\mu = \int_{\R^n} \xi\circ T \, d \mu \,.
\end{equation}
If $F \, dx$ and $G \, dx$ are absolutely continuous probability measures on $\R^n$, then the Brenier-McCann theorem (see \cite{Brenier91, McCann97} or \cite[Cor. 2.30]{VillaniTopics}) ensures the existence of a lower semicontinuous convex function $\vphi: \R^n \to \R\cup \{ +\infty\} $ such that
\begin{equation}\label{eqn: Brenier pf}
(\n \vphi ) \# F\,dx = G\, dx \,.
\end{equation}
By convexity, $\vphi$ is differentiable a.e. on the open convex set $\Om$ defined as the interior of $\{\vphi<\infty\}$, its gradient satisfies
\[
\n\vphi\in (BV\cap L^\infty)_{{\rm loc}}(\Om;\R^n)\,,
\]
and $F\,dx$ is concentrated on $\Om$ with
\begin{equation}\label{eqn: supports}
	\supp(G\, dx) = \overline{\n \vphi (\supp(F\, dx))}\,,
\end{equation}
thanks to \eqref{eqn: Brenier pf}. The map $T=\n\vphi$ is called the {\it Brenier map} between $F\,dx$ and $G\,dx$, and, as shown in \cite{McCann97} (cf. \cite[Theorem 4.8]{VillaniTopics}), it satisfies the Monge-Ampere equation
\begin{equation}\label{MA}
F(x) =  G(\n \vphi (x) ) \, \dett \nabla^2 \vphi(x)\qquad\mbox{ a.e. on $\supp(F\,dx)$}\,.
\end{equation}
Notice that the distributional gradient $DT$ of $T$ is an $n\times n$-symmetric tensor valued Radon measure on $\Om$. In \eqref{MA} we have set $\nabla^2\vphi=\nabla T$ where $DT=\nabla T\,dx+D^sT$ is the decomposition of $DT$ with respect to the Lebesgue measure on $\Om$. Notice that $\nabla T\,dx\le DT$ on $\Om$, and thus, setting $\Div T=\trace(\nabla T)$ and denoting by $\DIV T$ the distributional divergence of $T$, we have
\[
\Div\,T\,dx\le \DIV\,T\qquad\mbox{as measures on $\Om$}\,.
\]
Since $\nabla T(x)$ is positive semidefinite, by the arithmetic-geometric mean inequality
\[
(\dett \nabla^2 \vphi(x))^{1/n}=(\det\nabla T(x))^{1/n}\le\frac{\Div T(x)}n\qquad\mbox{for a.e. $x\in\Om$}\,,
\]
we finally conclude that
\begin{equation}
  \label{am inq x}
  (\dett \nabla^2 \vphi)^{1/n}\,dx\le \frac{\DIV T}n\qquad\mbox{as measures on $\Om$}\,.
\end{equation}

\begin{theorem}\label{thm: mother}
  If $n \geq 2$, $p\in [1,n)$, and $f$ and $g$ are non-negative functions in $L^1_{{\rm loc}}(H)$, vanishing at infinity, with
  \begin{equation}
    \label{hp on f and g}
    \left\{\begin{split}
      &\mbox{$\int_H|\nabla f|^p<\infty$ and $\int_{H}|x|^{p'}g^\pstar<\infty$ if $p>1$}
      \\
      &\mbox{$|Df|(H)<\infty$ and $\spt\,g\cc\ov{H}$ if $p=1$}
      \\
      &\| f\|_\norm = \|g\|_\norm=1
    \end{split}
    \right .
  \end{equation}
  then for every $t\in\R$, we have
  \begin{equation}\label{mother}
  n \int_{\Ha} g^{\psharp} \, dx \leq \psharp
  \|\n f\|_\pnorm Y(t,g) +t  \int_{\pa \Ha } f^{\psharp}\, d \Hi^{n-1}
  \end{equation}
  where we let
  \begin{equation} \label{eqn: Y def}
  Y(t,g) = \begin{cases}
  \Big( \int_{\Ha} g^{\pstar} | x  -t\,\ee|^{p'} \, dx\Big)^{1/p'} & \text{ if } p>1\,,\\
  \sup\{ |x-t\,\ee| : x \in \supp(g) \}& \text{ if } p=1\,,
  \end{cases}
  \end{equation}
  and where $\|\n f\|_\pnorm$ is replaced by $|Df|(H)$ when $p=1.$
\end{theorem}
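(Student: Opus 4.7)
My plan is to iterate the mass transportation argument of Cordero-Erausquin--Nazaret--Villani, following the template used for the Sobolev and Escobar inequalities, with the key modification of replacing the Brenier map $T$ with the shifted vector field $\tilde T = T - t\,\ee$. Let $T = \nabla\vphi$ be the Brenier map pushing $f^{\pstar}\,dx$ (a probability density on $H$, since $\|f\|_{L^{\pstar}(H)} = 1$) forward to $g^{\pstar}\,dx$. By \eqref{eqn: supports}, $T$ sends a.e.\ point of $\spt(f^{\pstar})\subset \ov H$ into $\ov{\spt(g^{\pstar})} \subset \ov H$; in particular, $T_1 \ge 0$ a.e.\ on $\spt(f^{\pstar})$. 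Since $t\,\ee$ is constant, $\nabla \tilde T = \nabla^2\vphi$ and $\DIV(\tilde T) = \DIV(T)$, so that \eqref{am inq x} continues to hold for $\tilde T$.

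I then rewrite the left-hand side of \eqref{mother} by change of variables. Taking $\psi = g^{-\pstar/n}$ in \eqref{eqn: integral push forward}, using \eqref{MA} to write $g^{-\pstar/n}(T(x)) = f^{-\pstar/n}(x)(\det\nabla^2\vphi(x))^{1/n}$, and noting the identity $\psharp = \pstar(n-1)/n$, I obtain
\begin{equation*}
n\int_H g^{\psharp}\,dy \;=\; n\int_H f^{\psharp}(x)\,(\det\nabla^2\vphi(x))^{1/n}\,dx \;\le\; \int_H f^{\psharp}\,d\,\DIV(\tilde T),
\end{equation*}
where the last step uses \eqref{am inq x}. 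An integration by parts on $H$ produces
\begin{equation*}
\int_H f^{\psharp}\,d\,\DIV(\tilde T) \;=\; -\psharp\int_H f^{\psharp-1}\nabla f\cdot \tilde T\,dx \;+\; \int_{\pa H} f^{\psharp}\,\tilde T\cdot\nu_H\,d\Hi^{n-1}.
\end{equation*}
Since $\nu_H = -\ee$ and $T_1\ge 0$ on $\spt(f^{\pstar})$, the boundary integrand $\tilde T\cdot\nu_H = t - T_1 \le t$, giving an upper bound of $t\int_{\pa H}f^{\psharp}\,d\Hi^{n-1}$. For the interior term, H\"older's inequality with exponents $p,p'$ yields
\begin{equation*}
-\psharp\int_H f^{\psharp-1}\nabla f\cdot\tilde T\,dx\;\le\; \psharp\, \|\nabla f\|_{L^p(H)}\Big(\int_H f^{(\psharp-1)p'}|\tilde T|^{p'}\,dx\Big)^{1/p'};
\end{equation*}
the exponent identity $(\psharp-1)p' = \pstar$ and a further push-forward then identify the right factor as $\big(\int_H g^{\pstar}|y - t\,\ee|^{p'}\,dy\big)^{1/p'} = Y(t,g)$. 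Summing these two estimates produces \eqref{mother} when $p>1$. The case $p=1$ is parallel: $Df$ replaces $\nabla f\,dx$, and H\"older is replaced by the bound $|\tilde T|\le Y(t,g)$ on $\spt(f^{\pstar})$, a consequence of \eqref{eqn: supports} and the assumption $\spt\,g\cc \ov H$.

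The main obstacle is justifying the integration by parts step rigorously. Since $\tilde T$ is only $BV_{{\rm loc}}$---its distributional divergence is a Radon measure with a non-negative singular part---and $f^{\psharp}$ has only Sobolev (or $BV$, if $p=1$) regularity, one must carefully interpret the boundary trace of $\tilde T$ on $\pa H$, absorb the singular part of $\DIV(\tilde T)$ (which contributes with the favorable sign, via $\DIV(\tilde T)\ge \Div(\tilde T)\,dx$, a consequence of the convexity of $\vphi$), and give a meaning to the product $\nabla f\cdot\tilde T$. These are precisely the technical issues the authors defer to the Appendix.
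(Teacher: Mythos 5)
Your proposal reproduces the paper's argument essentially line for line: same Brenier map pushing $f^{\pstar}\,dx$ to $g^{\pstar}\,dx$, same arithmetic--geometric-mean and Monge--Amp\`ere step \eqref{eqn: AMGM inequality}, same trick of subtracting the constant field $t\,\ee$ before the integration by parts, the same sign observation $S\cdot(-\ee)\le t$ on $\spt(f)\cap\pa H$ coming from \eqref{eqn: supports}, and the same H\"older/push-forward identification of the factor $Y(t,g)$ (replaced by the $L^\infty$ bound when $p=1$). The only superficial difference is that the paper first reduces by approximation to $f\in C^1_c(\ov{H})$ before doing the formal computation, while you perform the computation directly and then flag the $BV$/trace technicalities at the end -- which is exactly what the paper's Lemma~\ref{lem: IBP} in the appendix handles.
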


\begin{remark}
  {\rm Let us first recall that the assumption that $f$ is vanishing at infinity means that $|\{f>t\}|<\infty$ for every $t>0$. Next we notice that, by \eqref{escobar inequality lp}, \eqref{hp on f and g} implies $f\in L^\psharp(\pa H)$, so that the multiplication by a possibly negative $t$ on the right-hand side of \eqref{mother} is of no concern. Finally, we notice that \eqref{mother} implies that $g\in L^\psharp(H)$, but this fact can be more directly deduced by means of H\"older's inequality from the assumptions on $g$ stated in \eqref{hp on f and g}.}
\end{remark}

\begin{proof}
Arguing by approximation, it suffices to prove \eqref{mother} when $f\in C^1_c(\overline{H})$ (that is, $f$ admits an extension in $C^1_c(\R^n)$). Let us set $F=1_H\,f^\pstar$ and $G=1_H\, g^\pstar$ and consider the Brenier map $\n\vphi$ between the probability measures $F\,dx$ and $G\,dx$. In this way,
$T=\n\vphi\in (BV\cap L^\infty)_{\rm loc}(\Om;\R^n)$ with $\Om$ defined as above and $F\,dx$ is concentrated on $\Om$. By \eqref{eqn: Brenier pf}, \eqref{eqn: integral push forward} (applied with $\xi=1_{\{G>0\}}\,G^{-1/n}$), \eqref{MA} and \eqref{am inq x} respectively, we have
\begin{equation}\label{eqn: AMGM inequality}
\int_H\,g^\psharp=\int_{\R^n}  G^{1-1/n}  =\int_{\R^n} G(\n \vphi)^{-1/n} F  =\int_{\R^n} (\dett \nabla^2 \vphi)^{1/n} F^{1-1/n}  \le
\frac{1}{n} \int_{\R^n}  F^{1-1/n} \,d(\Div T)\,.
\end{equation}
By a slight modification of \cite{Nazaret2006},  we subtract the divergence-free vector field $t\,\ee$ from $T$,
\[
\int_{\R^n}  F^{1-1/n} \,d(\DIV T)= \int_{H} f^\psharp  \,d(\DIV S) \,,\qquad S=T-t\,\ee\,,
\]
where $S\in (BV\cap L^\infty)_{{\rm loc}}(\Om;\R^n)$. By the trace theorem for $BV$ functions (see e.g. \cite[Theorem 1, p.177]{EvansGariepyBOOK}), $S$ has a trace $S\in L^1_{{\rm loc}}(\Om\cap\pa H)$ such that
\[
\int_{H}\,\psi\,d(\DIV S)=-\int_H\,\nabla\psi\cdot S-\int_{\pa H}\,\psi\,(S\cdot\ee)\,,\qquad\forall\psi\in C^1_c(\Om\cap\ov{H})\,.
\]
We now use the assumption that $f\in C^1_c(\ov{H})$ (together with the fact that $F\,dx$ is concentrated on $\Om$) to apply  this identity with $\psi=f^\psharp$. In this way, we find
\begin{equation*}\label{eqn: formal IBP}
 \int_H f^\psharp\,d(\Div S)= - \psharp \int_{\Ha} f^{\psharp -1} \n f \cdot S\, dx  - \int_{\pa {\Ha}} f^{\psharp} S\cdot\ee d\Hi^{n-1}\,.
 \end{equation*}
Since $\ov{T(\spt( F\,dx))}=\spt(G\,dx)\subset\ov{H}$, by standard properties of the trace operator we have $S(x) \cdot(-\ee) \leq t$ for $\H^{n-1}$-a.e. on $x\in\supp(f)\cap\pa H$. Thus, in summary,
 \begin{equation}\label{eqn: formal IBP new }
 n\,\int_H\,g^\psharp\leq - \psharp \int_{\Ha} f^{\psharp -1} \n f \cdot( T  - t\,\ee)   + t\int_{\pa {\Ha}} f^{\psharp} \, d\Hi^{n-1}\,.
 \end{equation}
Finally, we bound the first term on the right hand side of \eqref{eqn: formal IBP new }. In the case that $p\in(1,n)$, by using H\"{o}lder's inequality and the transport condition \eqref{eqn: integral push forward} we find
\begin{align}\nonumber
-\psharp \int_{\Ha} f^{\psharp -1} \n f \cdot( T  - t\,\ee)
&  \leq
\psharp \|\n f\|_\pnorm \Big( \int_{\Ha} f^{\pstar} | T(x)  -t\,\ee|^{p'} \, dx\Big)^{1/p'} \\
 & \label{eqn: Holder step}
  =
\psharp \|\n f\|_\pnorm \Big( \int_{\Ha} g^{\pstar} | x  - t\,\ee|^{p'} \, dx\Big)^{1/p'}.
\end{align}
Combining this with \eqref{eqn: formal IBP new } implies \eqref{mother}. In the case $p=1$, in place of H\"older's inequality, we simply use \eqref{eqn: supports} and the fact that $\psharp=1$ to bound the left-hand side of \eqref{eqn: Holder step} by $Y(t,g)\,|Df|(H)$.
\end{proof}

In order to analyze the mother family of inequalities of Theorem \ref{thm: mother} we will need a characterization of the corresponding equality cases, which involves the functions $U_{S,t}$, $U_{E,t}$ and $U_{BE,t}$ previously introduced in \eqref{def of USt}, \eqref{def of UEt} and \eqref{def of UBEt}. Following \cite{cenv}, given two non-negative measurable functions $f$ and $g$, we call $f$ a {\it dilation-translation image} of $g$ if there exist $C > 0, \lambda \neq 0,$ and $x_0\in \R^n$ such that $f(x) = C g(\lambda(x-x_0))$. Since \eqref{mother} is not invariant with respect to translations in the $\ee$ direction, we distinguish that $f$ is a {\it dilation-translation image} of $g$ {\it orthogonal to $\ee$} if $f$ is a dilation-translation image of $g$ with $x_0 \cdot \ee = 0.$ If $\int_{\Ha} f^\pstar \, dx = \int_{\Ha} g^\pstar\, dx$ and  $f$ is a dilation-translation image of $g$ orthogonal to $\ee$, then $C$ must be equal to $\lambda^{(n-p)/n}$, and the Brenier map pushing forward $f^\pstar \, dx$ onto $g^\pstar\,dx$ satisfies $\n \vphi = \lambda ({\rm Id} - x_0)$ with $x_0 \cdot \ee = 0$. With this terminology at hand, we state the required characterization theorem:

\begin{theorem}\label{thm: equality cases} Under the same assumptions of Theorem \ref{thm: mother}, suppose that
  \begin{equation}\label{mother equality case}
  n \int_{\Ha} g^{\psharp} \, dx =\psharp
  \|\n f\|_\pnorm Y(t,g) +t  \int_{\pa \Ha } f^{\psharp}\, d \Hi^{n-1}\,,\qquad \int_{\pa H}f^\psharp>0\,,
  \end{equation}
  where $|Df|(H)$ replaces  $\|\n f\|_\pnorm$ when $p=1$.

  \medskip

  \noindent {\it If $p\in(1,n)$}, then \eqref{mother equality case} holds for $t\ge 0$ if and only if $f$ and $g$ are both dilation-translation images orthogonal to $\ee$ of $ U_{S,t}$; and for $t<0$ if and only if $f$ and $g$ are both dilation-translation images orthogonal to $\ee$ of either $U_{S,t}$,  $U_{E,t}$, or $U_{BE,t}$.

  \medskip

  \noindent {\it If $p=1$}, then \eqref{mother equality case} can hold only for $t\in(-1,1)$. For such $t$, \eqref{mother equality case} holds if and only if $f$ and $g$ are dilation-translation images orthogonal to $\ee$ of  $ U_{S,t}$.
%
%
\end{theorem}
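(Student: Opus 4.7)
The plan is to trace equality in \eqref{mother equality case} back through each of the four inequalities used in the proof of Theorem~\ref{thm: mother}: (a) the AM--GM bound \eqref{am inq x}, (b) the absolute continuity estimate $\n T\,dx\le DT$, (c) the trace bound $S\cdot(-\ee)\le t$ on $\supp(f)\cap\pa H$, and (d) either H\"older's inequality \eqref{eqn: Holder step} (when $p>1$) or the supremum bound by $Y(t,g)$ (when $p=1$). Equality in (a) forces the eigenvalues of $\n T(x)$ to coincide for a.e.\ $x\in\{f>0\}$, while equality in (b) kills the singular part of $DT$; together they give $\n^2\vphi=\lambda(x)\,\Id$ almost everywhere on the convex open set $\Om=\INT\{\vphi<\infty\}$. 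Since $n\ge 2$ and $\n\vphi$ is a gradient, the Schwarz relations $\partial_j\vphi_{ii}=\partial_i\vphi_{ij}=0$ for $i\ne j$ promote $\lambda$ to a positive constant, so that $T(x)=\lambda(x-x_0)$ is affine. Equality in (c) then says $T\cdot\ee=0$ at the trace on $\supp(f)\cap\pa H$, and since $\int_{\pa H}f^\psharp>0$ that set has positive $\Hi^{n-1}$-measure, which forces $x_0\cdot\ee=0$; setting $x_*=x_0+(t/\lambda)\ee$, we obtain $T(x)-t\ee=\lambda(x-x_*)$ with $x_*\cdot\ee=t/\lambda$.

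\textbf{Profile and admissibility when $p>1$.} Equality in (d), combined with equality in $-\n f\cdot(T-t\ee)\le|\n f||T-t\ee|$, forces $-\n f(x)$ to be pointwise parallel to $x-x_*$ and $|\n f|^p$ to be proportional to $f^{\pstar}|x-x_*|^{p'}$. Hence $f$ is radial about $x_*$, and writing $f(x)=F(|x-x_*|)$ with $F$ decreasing, the profile $F$ satisfies the separable ODE $|F'(r)|^p=c\,F(r)^{\pstar}r^{p'}$. One integration gives $F(r)^{-p/(n-p)}=Ar^{p'}+B$ with $A>0$, and the three cases $B>0$, $B=0$, $B<0$ yield, up to rescaling, exactly the profiles $U_S$, $U_E$, $U_{BE}$ of \eqref{eqn: US}, \eqref{eqn: UE}, \eqref{def of UBEt}. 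Requiring $f\in L^{\pstar}(H)$ with radial centre at signed height $t/\lambda$ then pins down the admissible values of $t$: any $t\in\R$ works for $U_S$; $t<0$ is needed for $U_E$ (singularity outside $\overline H$); and $t<-1$ is needed for $U_{BE}$ (unit sphere of singularities outside $\overline H$). The Monge--Amp\`ere identity \eqref{MA} applied to the affine $T$ gives $f(x)=\lambda^{(n-p)/p}g(\lambda(x-x_0))$, transferring the classification verbatim from $f$ to $g$. The reverse implication is a direct check: taking $f=g$ equal to one of the listed profiles makes $T=\Id$ and each of (a)--(d) becomes an equality by inspection.

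\textbf{The case $p=1$, and the main obstacle.} For $p=1$, step (d) becomes $|T(x)-t\ee|\le Y(t,g)$ on $\supp|Df|$, which combined with Step~1 forces $|x-x_*|$ to be constant on $\supp|Df|$, so that $|Df|$ is concentrated on a single sphere centred at $x_*$; since $f\ge 0$ is vanishing at infinity, $f$ must be a constant multiple of $1_{B_R(x_*)}$, and $\int_{\pa H}f>0$ requires the ball to straddle $\pa H$, which after absorbing $R$ into the dilation parameter yields $t\in(-1,1)$ and $f$ a dilation-translation image of $U_{S,t}$ orthogonal to $\ee$. The hard part of the whole argument will be Step~1: the Brenier map is only in $(BV\cap L^\infty)_{\mathrm{loc}}(\Om)$ a priori, so one must carefully show that both equality in \eqref{am inq x} and the vanishing of $D^s T$ hold on a set of full $f^{\pstar}dx$-measure, and then exploit the gradient structure of $T$ to promote $\n^2\vphi=\lambda(x)\,\Id$ to constancy of $\lambda$ on the relevant components of $\Om$; this, together with the approximation from $f\in C^1_c(\overline H)$ to general $f$ satisfying \eqref{hp on f and g}, is exactly the sort of technicality that the paper defers to its appendix.
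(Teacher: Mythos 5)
Your proposal follows the paper's proof essentially step-for-step: AM--GM rigidity forces the Brenier map affine, the trace constraint yields $x_0\cdot\ee=0$, equality in H\"older (or in the supremum bound for $p=1$) yields a radial ODE whose solutions are scalings of $U_S$, $U_E$, $U_{BE}$ (or a ball indicator). The one small misattribution is that the paper's appendix is not about the affine-map rigidity (which it imports from \cite{cenv} and \cite{FigalliMaggiPratelliINVENTIONES}) but about extending the integration-by-parts identity \eqref{eqn: formal IBP new } from $f\in C^1_c(\overline H)$ to general admissible $f$ via Lemma~\ref{lem: IBP}, a step also needed to make your heuristic trace argument for $x_0\cdot\ee=0$ rigorous.
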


Since the proof of Theorem \ref{thm: equality cases} is just a technical variant of a similar argument from \cite{cenv}, we postpone its discussion  to the appendix.

\section{Study of $\Phi$} \label{sec: Phi}
\subsection{The case $p \in (1,n)$} By Theorem \ref{thm: equality cases}, if equality is achieved in the mother inequality \eqref{mother} by a triple $(t,f,g)$ with $\int_{\pa H}f^\psharp> 0$, then we either have $f=g=U_{S,t}$ when $t\ge0$, or
\[
f=g=U_{S,t}\qquad\mbox{or} \qquad f=g=U_{E,t} \qquad {\rm or }\qquad f=g=U_{BE,t}
\]
when $t<0$ (with the third possibility only when $t<-1$). The same scaling argument used to show that $G_E(t)=G_E$ and $T_E(t)=T_E$ for every $t<0$ (see \eqref{eqn: TE and GE}) serves to check that $Y(t, U_{E,t}) = |t|\, Y_E$, where we let $Y_E= Y(-1, U_{E,-1})$ and $Y(t, g)$ be as defined in \eqref{eqn: Y def}. Therefore, recalling the notation introduced in \eqref{eqn: TE and GE}, equality in \eqref{mother} for the Escobar family implies that
\begin{equation}\label{eqn: eq E}
n \int_{\Ha} U_{E,t}^\psharp \, dx = -t\psharp G_E Y_E + t\, T_E^\psharp \qquad \forall t<0\,.
\end{equation}
Similarly, let us
define the functions
\[
Y_S(t)= Y(t,U_{S,t}) \qquad \text{ and }\qquad Y_{BE}(t) = Y(t, U_{BE,t})\,.
\]
Then, recalling the definitions in \eqref{eqn: TS and GS} and \eqref{eqn: TBE and GBE}, equality in \eqref{mother} for the Sobolev and beyond-Escobar families implies the identities
\begin{equation}\label{eqn: eq other}
\begin{split}
n \int_{\Ha} U_{S,t}^\psharp \, dx &= \psharp G_S(t) Y_S(t) + t\,T_S(t)^\psharp
 \qquad \qquad \forall t\in \R \,, \\
n \int_{\Ha} U_{BE,t}^\psharp \, dx &= \psharp G_{BE}(t) Y_{BE}(t) + t\,T_{BE}(t)^\psharp \qquad \forall t<-1 \,.
\end{split}
\end{equation}
From \eqref{eqn: eq E} and \eqref{eqn: eq other}, Theorems~\ref{thm: mother} and \ref{thm: equality cases} yield the following corollary.

\begin{corollary}\label{cor: cor of mother}
If $h\in L^1_{{\rm loc}}(H)$ is a non-negative function vanishing at infinity with $\nabla h\in L^p(H;\R^n)$ and $ \| h \|_{L^{\pstar}(H)}=1$, then, \begin{align}\label{eqn: consequence of mother}
\psharp Y_S(t) G_S(t)  + t\, T_S(t)^{\psharp}& \leq \psharp Y_S(t)\|\n h\|_\pnorm + t \, \| h\|_\tnorm^\psharp \qquad \ \ \forall t\in \R\,, \\
\label{eqn: consequence of mother BE}
\psharp Y_{BE}(t) G_{BE}(t)  + t\, T_{BE}(t)^{\psharp}& \leq \psharp Y_{BE}(t)\|\n h\|_\pnorm + t \, \| h\|_\tnorm^\psharp \qquad \forall t<-1\,, \\
\label{eqn: consequence of mother E}
 \psharp Y_E G_E - \, T_E^{\psharp} &\leq  \psharp Y_E\|\n h\|_\pnorm - \, \| h\|_\tnorm^\psharp.
\end{align}
Furthermore, equality in \eqref{eqn: consequence of mother} (resp. \eqref{eqn: consequence of mother BE}, \eqref{eqn: consequence of mother E}) is attained if and only if $h$ is a dilation-translation image orthogonal to $\ee$ of $U_{S,t}$ (resp. $U_{BE,t}$, $U_{E,t}$).
Particularly,
\begin{eqnarray*}
\| h\|_\tnorm = T_S(t) & \implies & G_S(t)\leq \|\n h\|_\pnorm\,; \\
\| h\|_\tnorm = T_{BE}(t) & \implies & G_{BE}(t)\leq \|\n h\|_\pnorm\,; \\
\| h\|_\tnorm = T_E & \implies & G_E\leq \|\n h\|_\pnorm\,,
\end{eqnarray*}
and the following identities hold
\begin{equation}\label{eqn: param}
\Phi(T_S(t) ) = G_S(t)\quad\forall t\in\R\,, \qquad \Phi(T_{BE}(t) ) = G_{BE}(t)\quad\forall t<0\,,\qquad \Phi(T_E) = G_E\,.
 \end{equation}
\end{corollary}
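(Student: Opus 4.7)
The corollary should follow by applying the mother inequality \eqref{mother} twice for each of the three extremal families and subtracting, exactly as in \cite{MV2005,Nazaret2006}. The key observation is that both sides of \eqref{mother} are linear in $g$ (through the term $n\int_H g^{\psharp}$ on the left and $Y(t,g)$ on the right) once the $f$-dependent quantities are held fixed, so one can treat the extremal as a ``gauge'' and compare against an arbitrary competitor.

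Concretely, I would first apply Theorem~\ref{thm: mother} with $f=h$ and $g=U_{S,t}$ (for $t\in\R$): this gives
\[
n\int_H U_{S,t}^\psharp\,dx \;\le\; \psharp\,\|\n h\|_\pnorm\, Y_S(t) + t\,\|h\|_\tnorm^\psharp\,,
\]
since $Y(t,U_{S,t})=Y_S(t)$ by definition. Then I would apply Theorem~\ref{thm: mother} again, this time with $f=g=U_{S,t}$, noting that the Brenier map between $U_{S,t}^{\pstar}\,dx$ and itself is the identity, so the arithmetic-geometric mean step in the proof of Theorem~\ref{thm: mother} and the application of H\"older's inequality become equalities; this produces the identity \eqref{eqn: eq other}. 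Subtracting the identity from the inequality (the $n\int_H U_{S,t}^\psharp\,dx$ terms cancel) yields precisely \eqref{eqn: consequence of mother}. The same scheme, with $g=U_{BE,t}$ for $t<-1$ and with $g=U_{E,-1}$ (using the scaling $Y(t,U_{E,t})=|t|Y_E$ and $T_E(t)\equiv T_E$), produces \eqref{eqn: consequence of mother BE} and \eqref{eqn: consequence of mother E} from \eqref{eqn: eq other} and \eqref{eqn: eq E} respectively.

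For the equality statements, if equality holds in \eqref{eqn: consequence of mother} (resp.\ \eqref{eqn: consequence of mother BE}, \eqref{eqn: consequence of mother E}) for some admissible $h$, then equality must hold in the mother inequality applied to $(f,g)=(h,U_{S,t})$ (resp.\ $(h,U_{BE,t})$, $(h,U_{E,-1})$) with the same parameter $t$. Since the gauge function $g$ in each case is a nonzero multiple of the corresponding extremal and thus has $\int_{\pa H}g^\psharp>0$ (inheriting this from $h$ on the left-hand sides via the manipulation), Theorem~\ref{thm: equality cases} applies: $h$ and $g$ must both be dilation-translation images orthogonal to $\ee$ of the same one of $U_{S,t}$, $U_{E,t}$, $U_{BE,t}$. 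Since $g$ itself fixes which family we are in, $h$ is forced to be a dilation-translation image orthogonal to $\ee$ of $U_{S,t}$ (resp.\ $U_{BE,t}$, $U_{E,t}$), as claimed.

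The three implications involving $\|h\|_\tnorm$ are then immediate: substituting, e.g., $\|h\|_\tnorm=T_S(t)$ into \eqref{eqn: consequence of mother} cancels the two $t\,T_S(t)^\psharp$ terms, leaving $\psharp Y_S(t)\,G_S(t)\le \psharp Y_S(t)\,\|\n h\|_\pnorm$, which divides through by the positive factor $\psharp Y_S(t)$; and similarly for the other two. Finally, the parametric identities \eqref{eqn: param} follow by combining these implications with the fact that $U_{S,t}$ (resp.\ $U_{BE,t}$, $U_{E,t}$) is itself an admissible competitor in the variational problem $\Phi(T_S(t))$ (resp.\ $\Phi(T_{BE}(t))$, $\Phi(T_E)$) realizing the gradient norm $G_S(t)$ (resp.\ $G_{BE}(t)$, $G_E$): one inequality gives $\Phi\le G$, the other $G\le \Phi$. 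The only mildly subtle point is verifying that each gauge $g$ satisfies the integrability hypothesis \eqref{hp on f and g} (so that the mother inequality may legitimately be invoked), which amounts to a direct check on the explicit profiles $U_{S,t}$, $U_{E,t}$, $U_{BE,t}$, together with the observation that the ``self-application'' $f=g$ really does saturate each step of the proof of Theorem~\ref{thm: mother}; these are routine and I would dispatch them at the start.
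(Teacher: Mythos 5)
Your proposal is correct and mirrors the paper's argument: apply the mother inequality with $f=h$ and $g$ the relevant extremal profile, use the identities \eqref{eqn: eq E}--\eqref{eqn: eq other} (which the paper derives from the ``if'' direction of Theorem~\ref{thm: equality cases}, and which you equivalently verify by noting the Brenier map from the extremal to itself is the identity, making the AM-GM and H\"older steps equalities) to substitute for $n\int_H g^\psharp$, and invoke Theorem~\ref{thm: equality cases} for the equality characterization. The reduction to the three implications and to \eqref{eqn: param} is as in the paper.
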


Next, we prove some properties of the Sobolev and beyond-Escobar families.

\begin{proposition} \label{prop: T and G} The following properties hold:
\begin{enumerate}
  \item[(i)] $T_S$ is strictly decreasing on $\R$ with range $(0, T_E)$, and $T_S(0)=T_0<T_E$;
  \item[(ii)] $G_S$ is strictly increasing on $[0,\infty)$ with range $[2^{-1/n}S, G_E)$, and is strictly decreasing on $(-\infty,0)$ with range $(2^{-1/n}S,G_E)$;
  \item[(iii)] $T_{BE}(t)$ is strictly increasing for $t<-1$ with range $(T_E, \infty)$;
  \item[(iv)] $G_{BE}(t)$ is strictly increasing for $t<-1$ with range $(G_E, \infty)$.
\end{enumerate}
\end{proposition}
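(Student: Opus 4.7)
The plan is to combine injectivity, drawn from the equality characterization in Corollary~\ref{cor: cor of mother}, with continuity of the integrals (via dominated convergence) to obtain strict monotonicity of each function, and then to pin down the directions and endpoints by explicit evaluations and asymptotic computations.

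For \emph{injectivity of $T_S$}, I would argue by contradiction: if $T_S(t_1)=T_S(t_2)$ for $t_1\neq t_2$, then testing \eqref{eqn: consequence of mother} at parameter $t_1$ with $h=U_{S,t_2}$, and symmetrically at $t_2$ with $h=U_{S,t_1}$, yields $G_S(t_1)=G_S(t_2)$ and equality in both inequalities; by Corollary~\ref{cor: cor of mother} this forces $U_{S,t_2}$ to be a dilation-translation image of $U_{S,t_1}$ orthogonal to $\ee$, but the unique maximum of $U_{S,t_i}$ sits at $t_i\ee$, which is incompatible with $t_1\neq t_2$. The same scheme applied to \eqref{eqn: consequence of mother BE} gives injectivity of $T_{BE}$ on $(-\infty,-1)$, using that the singular sphere $\pa B_1(t_i\ee)$ of $U_{BE,t_i}$ is preserved by such a transformation only if $t_1=t_2$. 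Together with continuity, $T_S$ and $T_{BE}$ are then strictly monotonic.

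With $T_S$ and $T_{BE}$ known monotonic, the monotonicity of $G_S$ and $G_{BE}$ follows directly from the mother inequalities. Testing \eqref{eqn: consequence of mother} at parameter $t$ with $h=U_{S,t'}$ and $t\neq t'$, then rearranging, gives the strict inequality
\[
\psharp\,Y_S(t)\bigl(G_S(t)-G_S(t')\bigr)<t\bigl(T_S(t')^\psharp-T_S(t)^\psharp\bigr).
\]
For $0\leq t<t'$, the right-hand side is nonpositive (strictly so when $t>0$; when $t=0$ the strict mother inequality itself provides the strict sign), which forces $G_S(t)<G_S(t')$ and gives strict increase on $[0,\infty)$. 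Swapping the roles of $t$ and $t'$ handles $t<t'\leq 0$ and yields strict decrease on $(-\infty,0]$. The same argument on $(-\infty,-1)$ using \eqref{eqn: consequence of mother BE} delivers the monotonicity of $G_{BE}$.

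For endpoints and ranges, $T_S(0)=T_0$ is read off the definition and $G_S(0)=2^{-1/n}S$ comes from the even symmetry of $U_S$ (so that $\|U_S\|_{L^\pstar(H)}^\pstar=\tfrac12\|U_S\|_{L^\pstar(\R^n)}^\pstar$, and similarly for the gradient norm). The limits $T_S(+\infty)=0$ and $G_S(+\infty)=S$ are immediate (the Sobolev bubble moves fully inside $H$); $T_S(-\infty)=T_E$ and $G_S(-\infty)=G_E$ come from the tail asymptotic $U_S(y)\sim U_E(y)$ at infinity, and the same tail matching gives $T_{BE}(-\infty)=T_E$ and $G_{BE}(-\infty)=G_E$. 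The main obstacle is the limit as $t\to -1^-$ for the beyond-Escobar family: the singular sphere of $U_{BE,t}$ becomes tangent to $\pa H$ at $\ee$, and a careful local computation near this tangency is needed to extract the matching leading-order blow-up rates (in $\delta=-t-1$) of the three integrals $\int_{\pa H}U_{BE}(\cdot-t\ee)^\psharp$, $\int_H U_{BE}(\cdot-t\ee)^\pstar$, and $\int_H |\nabla U_{BE}(\cdot-t\ee)|^p$, and thereby conclude $T_{BE}(t)\to\infty$ and $G_{BE}(t)\to\infty$. Combined with strict monotonicity and continuity, these limits fix the directions of monotonicity and the ranges claimed in (i)--(iv).
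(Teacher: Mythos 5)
Your proposal follows the same overall strategy as the paper: use the equality characterization from Corollary~\ref{cor: cor of mother} to deduce injectivity of $T_S$ and $T_{BE}$, propagate this to piecewise strict monotonicity of $G_S$ and $G_{BE}$, and then pin down directions and ranges by computing the boundary limits. The one place you diverge is in deducing monotonicity of $G_S$: you rearrange \eqref{eqn: consequence of mother} and sign the right-hand side $t\bigl(T_S(t')^\psharp - T_S(t)^\psharp\bigr)$, which requires that you already know $T_S$ is \emph{decreasing}, not merely monotone. As written, your Step 2 therefore appeals to a direction you have not yet fixed; this is easily repaired by first observing $T_S(+\infty)=0$, but the paper avoids the issue altogether by arguing symmetrically: if $G_S(t_1)=G_S(t_2)$ with $t_1,t_2\ge 0$, then testing the mother inequality at $t_1$ with $h=U_{S,t_2}$ and vice versa yields both $T_S(t_1)\le T_S(t_2)$ and $T_S(t_2)\le T_S(t_1)$, so injectivity of $T_S$ forces $t_1=t_2$ without ever invoking a direction of monotonicity. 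Two smaller points: your injectivity argument for $T_S$ concludes from the maximum of $U_{S,t_i}$ sitting at $t_i\ee$ that a dilation-translation image orthogonal to $\ee$ forces $t_1=t_2$; the maximum argument alone only gives $t_2=t_1/\lambda$ and $x_0=0$, and you still need to match the shape of $U_S$ (for instance by comparing the decay of $(1+|y|^{p'})^{(p-n)/p}$ and $(1+\lambda^{p'}|y|^{p'})^{(p-n)/p}$ as $|y|\to\infty$) to force $\lambda=1$. Also, the tangency of $\pa B_1(t\ee)$ with $\pa H$ as $t\to -1$ occurs at the origin, not at $\ee$. Finally, you correctly flag that the blow-up analysis of $T_{BE}$, $G_{BE}$ near $t=-1$ is the real work, but you defer it; in the paper this is precisely the content of Step~4, and the lower bounds \eqref{lim 1 1}, \eqref{lim 1 2} together with the companion estimates \eqref{vicky1}--\eqref{vicky4} are also what later feed into the asymptotic expansion of $\Phi(T)$ in Theorem~\ref{thm: properties of Phi}, so skipping them here leaves a genuine gap that would have to be filled.
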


\begin{proof} {\it Step 1: Monotonicity of $T_S(t)$ and $T_{BE}(t)$.}
 Fix $t_1, t_2 \in \R$ and suppose $T_S(t_1) = T_S(t_2) = T.$ Then, \eqref{eqn: consequence of mother} implies that
\begin{eqnarray*}\label{eqn: one}
\psharp Y_S(t_1) G_S(t_1) + t_1 T^\psharp \leq \psharp Y_S(t_1) G_S(t_2) + t_1 T^\psharp , &  \rm{ thus } & G_S(t_1) \leq G_S(t_2),\ \ \rm{ and} \\
\label{eqn: two*}
 \psharp Y_S(t_2) G_S(t_2) + t_2 T^\psharp  \leq \psharp Y_S(t_2) G_S(t_1) + t_2 T^\psharp , &  \rm{ thus }  & G_S(t_2) \leq G_S(t_1).
\end{eqnarray*}
That is, $G_S(t_1)= G_S(t_2) = G$. Hence, $U_{S,t_2}$ attains equality in \eqref{eqn: consequence of mother} with $t=t_1.$ Uniqueness in \eqref{eqn: consequence of mother} then implies that $t_1 = t_2$. We conclude that $T_S(t)$ is injective, and, as $T_S(t)$ is continuous, it is strictly monotone for $t \in \R$. The identical argument using \eqref{eqn: consequence of mother BE} shows that $T_{BE}$ is strictly monotone for all $t<-1$.
\medskip

\noindent{\it Step 2: Piecewise monotonicity of $G_S(t)$ and $G_{BE}(t)$.}
Fix $t_1, t_2\geq 0$ and suppose that $G_S(t_1) = G_S(t_2) = G$.
Then, \eqref{eqn: consequence of mother} implies that
\begin{eqnarray*}
\psharp Y_S(t_1) G + t_1 T_S(t_1)^\psharp  \leq \psharp Y_S(t_1) G + t_1 T_S(t_2)^\psharp , &  \rm{ thus } & T_S(t_1) \leq T_S(t_2), \ \ \rm{ and}\\
\psharp Y_S(t_2) G + t_2 T_S(t_2)^\psharp  \leq \psharp Y_S(t_2) G + t_2 T_S(t_1)^\psharp, &  \rm{ thus }& T_S(t_2) \leq T_S(t_1).
\end{eqnarray*}
Since $T_S(t)$ is injective, we conclude that $t_1 = t_2$. Thus, $G_S(t)$ is strictly monotone for $t\ge0$. The analogous argument
shows that $G_S(t)$ is strictly monotone for $t< 0$
and that $G_{BE}(t)$ is strictly monotone for $t<-1.$
\medskip

\noindent {\it Step 3: Limit values of $T_{S}(t)$ and $G_S(t)$.} As $U_{S,t}$ is a renormalized translation of the optimal function $U_S$ in \eqref{sobolev inequality lp}, centered at $t\, \ee$, it is clear that $T_S(t) \to 0$ and $G_S(t) \to S$  as $t\to \infty$.
To compute the limit as $t\to -\infty$, let us set
\[
\gamma_t(x)= (1+ |x-t\,\ee|^{p'})^{-1}=|t|^{-p'}\,(|t|^{-p'}+|y+\ee|^{p'})^{-1}
\]
for $t<0$ and $y=-x/t$.
With this notation,
\begin{equation}\label{eqn: T and G}
T_S(t) =\frac{ \big(
	 \int_{\pa {\Ha}} \g_t^{n-1}\, d\H^{n-1}  \big)^{1/\psharp}}
	  {	\big( \int_{{\Ha}} \g_t^n\, dx	 \big)^{1/\pstar}},
\qquad
G_S(t) = \frac{	(n-p)\big(\int_{\Ha} \g_t^n |x-t\,\ee|^{p'} \, dx		\big)^{1/p} }{ (p-1)\big( \int_{{\Ha}} \g_t^n\, dx 		\big)^{1/\pstar}}.
\end{equation}
Now, suppose $t<0$ and let $\sigma = -(n-p)/(p-1).$ After factoring out $-t$ and changing variables, we find that
\begin{align*}
 \int_{\pa {\Ha}} \g_t^{n-1}\, d\H^{n-1} &=
|t|^{-p'(n-1)+(n-1)} \int_{\pa {\Ha}}  (|t|^{-p'} + |y + \ee |^{p'})^{-(n-1)} \, d\H^{n-1}_y\,,\\
\ \int_{\Ha}  \g_t^{n} \, dx & =  |t|^{-p'n+n}  \int_{\Ha} (|t|^{-p'} + |y+\ee|^{p'} )^{-n}\, dy\,,\\
\int_{\Ha} \g_t^{n}|x-t\,\ee|^{p'}\, dx & =  |t|^{-p'n+p'+n} \int_{\Ha} (|t|^{-p'} + |y+\ee|^{p'} )^{-n}|y+\ee|^{p'} \, dy\,.
\end{align*}
Since
\[
\frac{-p'(n-1)+(n-1)}{\psharp}-\frac{-p'n+n}{\pstar}=0\qquad\mbox\qquad \frac{-p'n+p'+n}{p}+\frac{p'n-n}{\pstar}=0\,,
\]
we find that, setting
\[
\bar\g_t(y)=(|t|^{-p'}+|y+\ee|^{p'})^{-1}\qquad y\in H\,,
\]
we have
\[
T_S(t)=\frac{\Big(\int_{\pa H}\bar\g_t^{n-1}\Big)^{1/\psharp}}{\Big(\int_{H}\bar\g_t^{n}\Big)^{1/\pstar}}
\qquad
G_S(t) = \frac{	(n-p)\big(\int_{\Ha} \bar\g_t^n |y+\ee|^{p'} \,dy\big)^{1/p} }{ (p-1)\big( \int_{{\Ha}} \bar\g_t^n		\big)^{1/\pstar}}
\]
By monotone convergence, we thus find that
\[
\lim_{t\to-\infty}T_S(t)=\frac{\|U_E(\cdot+\ee)\|_{L^\psharp(\pa H)}}{\|U_E(\cdot+\ee)\|_{L^\pstar(H)}}=T_E
\qquad
\lim_{t\to -\infty}G_S(t) = \frac{\|\nabla U_E(\cdot+\ee)\|_{L^p(H)}}{\|U_E(\cdot+\ee)\|_{L^\pstar(H)}}=G_E\,,
\]
as claimed. Having shown that $T_S$ is smooth and injective on $\R$ with $T_S(+\infty)=0$ and $T_S(-\infty)=T_E>0$, we deduce that $T_S$ is strictly decreasing on $\R$ with range $(0,T_E)$. Since $T_0=T_S(0)<T_S(-\infty)=T_E$, we have completed the proof of statement (i). Similarly, the first part of (ii) follows since $G_S(0)=2^{-1/n}S<S=G_S(+\infty)$ and $G_S$ is smooth and injective on $[0,\infty)$. Similarly, the injectivity of $G_S$ on $(-\infty,0)$ together with the fact that by \eqref{sobolev inequality lp gradient domain} (recall \eqref{G0 less than GE}) $G_S(0)=2^{-1/n}\,S<E=G_E=G_S(-\infty)$ implies that $G_S$ is strictly decreasing on $(-\infty,0)$ with range $(2^{-1/n}S,E)$. This proves statement (ii).

\medskip

\noindent {\it Step 4: Limit values of $T_{BE}(t)$ and $G_{BE}(t)$.} With an argument identical to that given for $T_S$ and $G_S,$ we establish that $T_{BE}(t) \to T_E$ and $G_{BE}(t) \to G_E$ as $t \to -\infty$. To compute the limit as $t\to -1^+$, we first notice that, for every $t<-1$ and setting $\e=|t|-1$,
\[
\int_{\pa H}U_{BE}(x-t\,\ee)^\psharp\ge\int_{B_{|t|+\e}(t\ee)\cap\pa H}\frac{d\H^{n-1}}{(|x-t\,\ee|^{p'}-1)^{n-1}}\,.
\]
Since $B_{|t|+\e}(t\ee)\cap\pa H$ is a $(n-1)$-dimensional disk of radius $\sqrt{(|t|+\e)^2-t^2}=\sqrt{2\e\,|t|+\e^2}\ge c\,\sqrt{\e}$, and since
$|x-t\,\ee|^{p'}-1\le (|t|+\e)^{p'}-1\le C\,\e$ for constants $c$ and $C$ depending on $n$ and $p$ only, we find that
\begin{equation}
  \label{ts1}
\int_{\pa H}U_{BE}(x-t\,\ee)^\psharp\ge \frac{c}{\e^{(n-1)/2}}=\frac{c}{|t+1|^{(n-1)/2}}\,.
\end{equation}
At the same time, we have
\[
\int_H U_{BE}(x-t\,\ee)^\pstar=\int_H(|x-t\,\ee|^{p'}-1)^{-n}\,dx=\int_{-t}^\infty\,(r^{p'}-1)^{-n}\,\H^{n-1}\big(H\cap\pa B_r(-t\,\ee)\big)\,dr
\]
where, thanks to the coarea formula,
\[
\H^{n-1}\big(H\cap\pa B_r(-t\,\ee)\big)=c(n)\,r^{n-1}\int_{-t/r}^1\,(1-s^2)^{(n-3)/2}\,ds\,.
\]
Since $1\le (1+s)^{(n-3)/2}\le C(n)$ for $s\in(-t/r,1)$ and
\[
r^{n-1}\,\int_{-t/r}^1\,(1-s)^{(n-3)/2}\,ds
=C(n)\,r^{n-1}\,(1+t/r)^{(n-1)/2}=C\,r^{(n-1)/2}\,(r+t)^{(n-1)/2}\,,
\]
we conclude that
\begin{equation}
  \label{lbbb}
  c(n)\,\le \frac{\H^{n-1}\big(H\cap\pa B_r(-t\,\ee)\big)}{r^{(n-1)/2}\,(r+t)^{(n-1)/2}}\le C(n)\,,\qquad\forall r\in(-t,\infty)\,.
\end{equation}
Hence, by $p>1$, and provided $t$ is close enough to $-1$
\begin{eqnarray*}
\int_H(|x-t\,\ee|^{p'}-1)^{-n}\,dx&\le&C\,\int_2^\infty\frac{r^{(n-1)/2}\,(r+t)^{(n-1)/2}}{(r^{p'}-1)^n}\,dr+
C\,\int_{-t}^2\frac{r^{(n-1)/2}\,(r+t)^{(n-1)/2}}{(r^{p'}-1)^n}\,dr
\\
&\le&C\int_2^\infty\frac{r^{n-1}}{r^{np'}}\,dr+C\int_{-t}^2\frac{dr}{(r-1)^{n-(n-1)/2}}
\\
&\le &C\,\big(1+ |t+1|^{-(n-1)/2}\big)\le C\,|t+1|^{-(n-1)/2}\,.
\end{eqnarray*}
(We also notice that, by \eqref{lbbb}, one also has an analogous estimate from below, that is
\begin{equation}
  \label{vicky1}
  \int_H(|x-t\,\ee|^{p'}-1)^{-n}\,dx\ge c\,|t+1|^{-(n-1)/2}\qquad\mbox{for $|t+1|$ small enough}
\end{equation}
as well as
\begin{equation}
  \label{vicky2}
  \int_H(|x-t\,\ee|^{p'}-1)^{-(n-1)}\,dx\le C\,|t+1|^{-(n-3)/2}\qquad\mbox{for $|t+1|$ small enough}\,.
\end{equation}
Both estimates will be used in the last step of the proof of Theorem \ref{thm: properties of Phi}.) By combining this last estimate with \eqref{ts1} we find that
\begin{equation}
  \label{lim 1 1}
  T_{BE}(t)\ge c\,\Big(|t+1|^{-(n-1)/2}\Big)^{1/\psharp-1/\pstar}=c\,|t+1|^{-1/2\pstar}\,,
\end{equation}
for every $t$ close enough to $-1$, where $c=c(n,p)>0$. This proves that $T_{BE}(t)\to +\infty$ as $t\to-1$. Analogously,  again with $\e=|t+1|$,
\begin{eqnarray*}
\int_H|\nabla U_{BE}(x-t\,\ee)|^p&\ge &c\,\int_{H\cap B_{|t|+\e}(t\,\ee)}\,(|x-t\,\ee|^{p'}-1)^{-n}\,|x-t\,\ee|^{p'}\,dx
\\
&=&c\int_{-t}^{-t+\e}\frac{(r^2-t^2)^{(n-1)/2}r\,^{p'}}{(r^{p'}-1)^{n}}\,dr
\end{eqnarray*}
so that, setting $r=|t|+s\,|t+1|$, noticing that $r^2-t^2\ge c\,s\,|t+1|$ and $1\le r^{p'}\le 1+C\,|t+1|$, we get
 \begin{eqnarray*}
\int_H|\nabla U_{BE}(x-t\,\ee)|^p&\ge &|t+1|^{(n-1)/2}\int_{0}^{1}\frac{s^{(n-1)/2}|t+1|\,ds}{|t+1|^n}\ge \frac{c}{|t+1|^{(n-1)/2}}\,.
\end{eqnarray*}
Hence,
\begin{equation}
  \label{lim 1 2}
  G_{BE}(t)\ge c\,\Big(|t+1|^{-(n-1)/2}\Big)^{(1/p)-(1/\pstar)}=c\,|t+1|^{-(n-1)/2n}\,,
\end{equation}
and
\[
\lim_{p\to -1^+} G_{BE}(t) = \infty\,.
\]
(We also notice, again for future use in the proof of Theorem \ref{thm: properties of Phi}, that together with \eqref{lim 1 2} we also have
\begin{equation}
  \label{vicky4}
  G_{BE}(t)\le C(n)\,|t+1|^{-(n-1)/2n}\,,
\end{equation}
provided $t$ is close enough to $-1$.) Statement (iii) and (iv) follow immediately.
\end{proof}

\begin{proof}
  [Proof of Theorem \ref{thm: main}] Immediate from Theorem \ref{thm: equality cases} and Proposition \ref{prop: T and G}.
\end{proof}


We now turn to the quantitative study of $\Phi(T)$. Let us recall that, by a classical variational argument, if $u$ is a minimizer in $\Phi(T)$, then there exists constants $\lambda$ and $\s$ such that
 \begin{equation}\label{eqn: EL eqn}
 \begin{cases}
 -\Delta_p u = \lambda |u|^{\pstar-2}u & \text{ in } \Ha\\
 -|\n u |^{p-2} \partial_{x_1} u = \sigma |u|^{\psharp - 2}u & \text{ on } \pa \Ha \,.
 \end{cases}
 \end{equation}
Observe that the existence of constants $\lambda$ and $\s$ satisfying \eqref{eqn: EL eqn} follows by direct computation using our characterization of minimizers. Moreover, we know that non-negative minimizers are positive, so that there is no need for the absolute values in \eqref{eqn: EL eqn}.

\begin{lemma} \label{lem: derivative of Phi} If $n\geq 2,$ $1\leq p <n$, $T \in (0, \infty)$ and $\lambda$ and $\s$ are the Lagrange multipliers appearing in \eqref{eqn: EL eqn} and corresponding to a minimizer $u$ in the variational problem $\Phi(T)$, then the following identities hold:
\begin{equation}\label{eqn: derivative identity}
\Phi(T)^p=\lambda+\s\,T^\psharp\qquad
\Phi'(T) = \frac{\psharp T^{\psharp -1} }{\Phi(T)^{p-1}}\,\s\,.
\end{equation}
\end{lemma}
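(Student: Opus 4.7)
Both identities are routine consequences of the Euler--Lagrange system \eqref{eqn: EL eqn}: the first is a Pohozaev-type scalar relation obtained by testing the EL against $u$ itself, and the second is an envelope-theorem computation along an admissible perturbation of $u$.

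\textbf{First identity.} I would multiply the interior equation $-\Delta_p u = \lambda u^{\pstar-1}$ by $u$ and integrate over $H$. Since $\|u\|_{L^\pstar(H)}=1$, the right-hand side is simply $\lambda$. On the left I integrate by parts, producing
\[
\int_H|\nabla u|^p - \int_{\pa H} u\,|\nabla u|^{p-2}(\nabla u\cdot\nu_H)\,d\Hi^{n-1},
\]
where $\nu_H=-\ee$ is the outer unit normal to $H$. The boundary EL equation, rewritten as $|\nabla u|^{p-2}\partial_{x_1}u = -\sigma u^{\psharp-1}$, turns the boundary contribution into $-\sigma\int_{\pa H}u^\psharp = -\sigma T^\psharp$. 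Rearranging yields $\Phi(T)^p = \lambda + \sigma T^\psharp$.

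\textbf{Second identity.} I would construct a smooth one-parameter family $\{u_\epsilon\}$ of nonnegative admissible competitors with $u_0 = u$, $\|u_\epsilon\|_{L^\pstar(H)} = 1$ for all $\epsilon$, and $\|u_\epsilon\|_{L^\psharp(\pa H)}^\psharp = T^\psharp + \epsilon$. By minimality, the nonnegative function $\epsilon\mapsto \|\nabla u_\epsilon\|_{L^p(H)} - \Phi(T_\epsilon)$, with $T_\epsilon=(T^\psharp+\epsilon)^{1/\psharp}$, vanishes to first order at $\epsilon=0$. Differentiating $\|\nabla u_\epsilon\|_{L^p(H)}$ at $0$, applying integration by parts exactly as in the first step, and using the EL equations, the derivative reduces to a linear combination of $\int_H u^{\pstar-1}\dot u$ and $\int_{\pa H}u^{\psharp-1}\dot u$. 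The former vanishes because differentiating the $\pstar$ constraint gives $\pstar\int_H u^{\pstar-1}\dot u = 0$, while differentiating the $\psharp$ constraint pins down the latter via $\psharp \int_{\pa H} u^{\psharp-1} \dot u = 1$. Combining these evaluations with $\Phi'(T)\cdot T'_\epsilon|_0$ on the right-hand side produces the claimed expression for $\Phi'(T)$ in terms of $\sigma$, $T$, and $\Phi(T)$.

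\textbf{Main obstacle.} The only non-bookkeeping step is producing a bona fide admissible family $u_\epsilon$: one must vary $u$ along directions tangent to $\{\|u\|_{L^\pstar(H)}=1\}$ while prescribing the first-order change of $\|u\|_{L^\psharp(\pa H)}^\psharp$. Since the minimizers provided by Theorem \ref{thm: main} are strictly positive with smooth (up to the boundary) traces, the two constraint functionals have linearly independent differentials at $u$, so an implicit-function argument (or an explicit construction of the form $u_\epsilon = (1+a(\epsilon))u + b(\epsilon)\phi$ for a suitable test function $\phi$) yields the required family and closes the argument.
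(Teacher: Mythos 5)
Your proposal is correct and matches the paper's argument essentially step for step: the first identity is obtained by multiplying the interior Euler--Lagrange equation by $u$, integrating by parts, and substituting the boundary Euler--Lagrange condition; the second is the same envelope-theorem computation, with the admissible one-parameter family built via an implicit-function argument using that $\|u\|_{L^\pstar(H)}$ and $\|u\|_{L^\psharp(\pa H)}$ have linearly independent first variations at the (positive) minimizer.
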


\begin{proof} The first identity follows from an integration by parts and \eqref{eqn: EL eqn}, so we focus on the second one. Since $T>0$ implies  $\int_{\pa H}u^\psharp>0$, there must be a function $\vphi \in C^{\infty}_c(\pa H)$ such that
	\begin{equation}
    \label{phi 1}
    \int_{\pa H} u^{\psharp -1} \vphi \, d \Hi^{n-1} = 1\,.
    \end{equation}
    Similarly, there exists $\xi\in C^\infty_c(H)$ such that
    \[
    \int_H\,u^{\pstar-1}\,\xi=1\,.
    \]
    Let $\psi$ be any function $\psi\in C^\infty_c(\ov{H})$ with $\psi=\vphi$ on $\pa H$, and extend $\vphi$ to $H$ by setting
    \[
    \vphi=\psi-\Big(\int_H\,u^{\pstar-1}\,\psi\Big)\,\xi\,.
    \]
    Then $\vphi \in C^\infty_c(\overline{H})$ and
    \begin{equation}
      \label{phi 2}
          \int_H u^{\pstar - 1} \vphi = 0\,.
    \end{equation}
    Now define a function $f:\R^2\to[0,\infty)$ by setting
    \[
    f(\e, \delta) =-1+ \int_H |u + \e \vphi + \delta \xi|^\pstar\qquad (\e,\de)\in\R^2\,.
    \]
    Since $u>0$ on $\ov{H}$, there exists a neighborhood $\U$ of $(\e,\de)=(0,0)$ such that $u + \e \vphi + \delta \xi>0$ on $\ov{H}$ for every $(\e,\de)\in\U$. Correspondingly, by \eqref{phi 2}
    \[
    f\in C^1(\U)\qquad f(0,0)\qquad \frac{\pa f}{\pa \de}(0,0)=\pstar\,\int_H\,u^{\pstar-1}\xi=1\,,
    \]
    and thus there exists $\e_0>0$ and $g:(-\e_0,\e_0)\to\R$ such that$(\e,g(\e))\in\U$ and $f(\e,g(\e))=0$ for every $|\e|<\e_0$. In particular,
     \[
    v_\e = u + \e \vphi + \g(\e)\,\xi\in C^\infty(\ov{H};(0,\infty))\qquad\int_H\,v_\e^\pstar=1\,,\qquad\forall |\e|<\e_0\,.
    \]
    By \eqref{phi 1}
    \begin{equation}\label{eqn: constraint}
	\frac{d}{d\e}\bigg|_{\e=0} \int_{\pa H } \frac{v_\e^\psharp}{\psharp} \, d\Hi^{n-1}=\int_{\pa H}u^{\psharp-1}\,\vphi=1\,,
    \end{equation}
    so that the function $\tau(\e)=\|v_\e\|_{L^\psharp(\pa H)}$ satisfies $\tau(0)=T$ and is strictly increasing on $(-\e_0,\e_0)$, up to possibly decreasing the value of $\e_0$. If we set $\G(\e)=\int_{H}|\nabla v_\e|^p$, then, by construction, $\Phi(\tau(\e))^p\le \Gamma(\e)$ for every $|\e|<\e_0$, with equality at $\e=0$, and thus
    \begin{equation}
      \label{s}
      	\frac{d}{d\e}\bigg|_{\e=0} \Phi(\tau(\e))^p=\frac{d}{d\e}\bigg|_{\e=0} \G(\e).
    \end{equation}
	We compute that
	\begin{equation}
      \label{s1}
  	 \frac{1}{p}\frac{d}{d\e}\bigg|_{\e=0} \G(\e) =
     \int_{H} |\n u|^{p-2} \n u \cdot \n \vphi \, dx = - \int_{H} \Delta_p u \vphi  - \int_{\pa H} |\n u |^{p-2} \pa_{x_1} u \vphi \, d\Hi^{n-1}.
    \end{equation}
	From \eqref{eqn: EL eqn}, $-\Delta_p u \vphi = \lambda u^{\pstar -1}$, and so the first term on the right-hand side of \eqref{s1} is equal to zero. Then, from \eqref{eqn: EL eqn} and \eqref{eqn: constraint}, the right-hand side of \eqref{s1} is equal to $\sigma$, and thus that of \eqref{s} to $p\,\s$. Since, again by \eqref{phi 1},
    \[
    \tau'(0)=\frac{T^{1-\psharp}}{\psharp}\,,
    \]
    we conclude from \eqref{eqn: constraint} that
    \[
    \Phi(T)^{p-1}\Phi'(T) \frac{T^{1-\psharp}}{\psharp}=\sigma\,,
    \]
    thus completing the proof of the lemma.
\end{proof}

We now prove Theorem \ref{thm: properties of Phi}, Corollary \ref{corollary bestbest} and Theorem \ref{thm: p=1}.

\begin{proof}[Proof of Theorem~\ref{thm: properties of Phi}]
{\it Step 1: Differentiability and monotonicity.} By Proposition \ref{prop: T and G} we know that $\Phi(T_{BE}(t))=G_{BE}(t)$ for every $t\in(-\infty,-1)$, where $T_{BE}$ is smooth and strictly increasing on $(-\infty,-1)$ with range $(T_E, \infty)$ and $G_{BE}(t)$ is smooth and strictly increasing on $(-\infty,-1)$ with range $(G_E, \infty)$. Thus, $\Phi$ is smooth on $(T_E,\infty)$ with $\Phi'(T)=G_{BE}'(t)/T_{BE}'(t)>0$ for $T=T_{BE}(t)$. This shows that $\Phi$ is smooth and strictly increasing on $(T_E,\infty)$. One can compute that
\[
\lim_{T\to T_E^+}\Phi'(T)=\lim_{t\to -\infty}\,\frac{G_{BE}'(t)}{T_{BE}'(t)}=\frac{G_E}{T_E}=
E\,.
\]
Similarly, $\Phi(T_S(t))=G_S(t)$ for every $t\in\R$ where $T_S$ is strictly decreasing on $\R$ with range $(0, T_E)$, and $T_S(0)=T_0<T_E$, and where $G_S$ is strictly increasing on $[0,\infty)$ with range $[2^{-1/n}S, S)$, and is strictly decreasing on $(-\infty,0)$ with range $(2^{-1/n}S,E)$. Hence $\Phi$ is smooth on $(0,T_E)$, and $\Phi'(T)=G_S'(t)/T_S'(t)>0$, $T=T_S(t)$, so $\Phi$ is strictly decreasing on $(0,T_0)$ and strictly increasing on $(T_0,T_E)$, and one computes
\[
\lim_{T\to T_E^-}\Phi'(T)=\lim_{t\to -\infty}\,\frac{G_S'(t)}{T_S'(t)}=\frac{G_E}{T_E}=E\,.
\]
Hence $\Phi$ is differentiable also at $T=T_E$, and thus on $(0,\infty)$.

\medskip

\noindent{\it Step 2: Concavity of $\Phi.$}
Next, we use Lemma~\ref{lem: derivative of Phi} to show that  $\Phi$ is concave for $T$ sufficiently small. By \eqref{eqn: derivative identity} we find that for every $t\in\R$,
\begin{equation}
  \label{hhh}
  \frac{d}{dT}\Phi(T_S(t))=\psharp\frac{T_S(t)^{\psharp-1}}{G_S(t)^{p-1}}\,\s(U_S(t))\,,
\end{equation}
where $\s(U_S(t))$ denotes the boundary Lagrange multiplier of $U_S(t)$. By combining \eqref{def of USt} and \eqref{eqn: EL eqn}, we see that
\[
\sigma(U_S(t))=-c(n,p)\,t\,\|U_S\|_{L^\pstar(\{x_1>t\})}^{p(p-1)/(n-p)}\,,
\]
for a positive constant $c(n,p)$. Thus,
\[
\frac{d}{dT}\Phi(T_S(t))=-c(n,p)\,t\,\frac{\|U_S\|_{L^{\psharp}(\{x_1=t\})}^{\psharp-1}}{\|\nabla U_S\|_{L^p(\{x_1>t\})}^{p-1}}\,,
\]
so, differentiating in $t$ (recall that $\Phi$ is smooth $(0,T_E)$), we find that
\[
\frac{d^2}{dT^2}\Phi(T_S(t))\,T_S'(t)=-c(n,p)\frac{d}{dt}\bigg(t\,\frac{\|U_S\|_{L^{\psharp}(\{x_1=t\})}^{\psharp-1}}{\|\nabla U_S\|_{L^p(\{x_1>t\})}^{p-1}}\bigg)\,.
\]
Since $T_S'(t)<0$ for every $t\in\R$, we conclude that $\Phi(T)$ is going to be concave on any interval $J=\{T_S(t):t\in J'\}$ corresponding to an interval $J'\subset\R$ such that
\begin{equation}
  \label{holds}
  \frac{d}{dt}\log\bigg(t\,\frac{\|U_S\|_{L^{\psharp}(\{x_1=t\})}^{\psharp-1}}{\|\nabla U_S\|_{L^p(\{x_1>t\})}^{p-1}}\bigg)<0\,,\qquad\forall t\in J'\,.
\end{equation}
For the sake of brevity, set
\begin{eqnarray*}
h(t)=\int_{\{x_1=t\}}U_S^\psharp =\int_{\pa H} (1+|x- t\,\ee|^{p'})^{-(n-1)}\, d \Hi^{n-1}\,.
\end{eqnarray*}
We are thus looking for an interval $J'$ such that
\[
\frac1t+\frac{\psharp-1}{\psharp}\,\frac{h'(t)}{h(t)}-\frac{p-1}p\,\frac{\frac{d}{dt}\int_{\{x_1>t\}}|\nabla U_S|^p}{\int_{\{x_1>t\}}|\nabla U_S|^p}<0\qquad\forall t\in J'\,.
\]
Since $\int_{\{x_1>t\}}|\nabla U_S|^p$ is trivially increasing in $t$, it suffices to find an interval $J'$ such that
\[
\frac1t+\frac{n(p-1)}{p(n-1)}\,\frac{h'(t)}{h(t)}<0\qquad\forall t\in J'\,.
\]
If $t>0$, then factoring and changing variables, we find that
\[
h(t)= t^{-(n-1)/(p-1)} \int_{\pa H} (t^{-p'}+ |x-\ee|^{p'})^{-(n-1)} \, d\Hi^{n-1}\,.
\]
Therefore, we compute
\[
\frac{h'(t)}{h(t)} = -\frac{n-1}{(p-1)t}  +  \frac{p'(n-1)\int_{\pa H} (t^{-p'} + |x+\ee|^{p'})^{-n} \, d\Hi^{n-1} }{t^{p' +1}\int_{\pa H} (t^{-p'} + |x+\ee|^{p'})^{-(n-1)} \, d\Hi^{n-1} }\,,
\]
where trivially $t^{-p'} + |x+\ee|^{p'}>1$ for $x\in\pa H$, and thus
\[
\int_{\pa H} (t^{-p'} + |x+\ee|^{p'})^{-n} \, d\Hi^{n-1}<\int_{\pa H} (t^{-p'} + |x+\ee|^{p'})^{-(n-1)} \, d\Hi^{n-1}\,.
\]
We have thus proved that for every $t>0$,
\[
\frac{h'(t)}{h(t)}
 \leq\frac1{t}\, \frac{n-1}{p-1} \Big(-1+\frac{p}{t^{p'}}\Big)\,,
\]
so that
\[
\frac1t+\frac{n(p-1)}{p(n-1)}\,\frac{h'(t)}{h(t)}\le \frac1t+\frac1{t}\, \frac{n}{p} \Big(-1+\frac{p}{t^{p'}}\Big)\,.
\]
This last quantity is negative for $t>(\pstar)^{1/p'}$. Thus, \eqref{holds} holds with the choice
\[
J'=(-\infty,(\pstar)^{1/p'})
\]
and correspondingly $\Phi(T)$ is strictly concave on $(0,T_*)$ provided we set
\[
T_*=T_S((\pstar)^{1/p'})\,.
\]

\medskip

\noindent{\it Step 3: Convexity of $\Phi$.} By \eqref{eqn: consequence of mother BE} we have that for every $t<-1$
\begin{equation}
  \label{eqn: used for convexity}
  \Phi(T)\ge G_{BE}(t)+t\,\frac{T_{BE}(t)^\psharp-T^\psharp}{\psharp\,Y_{BE}(t)}\qquad\forall T>0\,,
\end{equation}
with equality if and only if $T=T_{BE}(t)$. If we denote by $\Psi_t(T)$ the right-hand side of \eqref{eqn: used for convexity}, this shows that
\[
\Phi(T)=\sup_{t<-1}\Psi_t(T)\qquad \forall T\in\big\{T_{BE}(t):t<-1\}=(T_E,\infty)\,.
\]
Since each $\Psi_t(T)$ is convex as a function of $T$ (recall that $t$ is negative), this proves that $\Phi(T)$ is convex on $(T_E,\infty)$.
We can perform the same argument based on \eqref{eqn: consequence of mother}, as soon as the parameter $t\in\R$ describing the Sobolev family is negative. This proves the convexity of $\Phi(T)$ over the interval
\[
\big\{T_S(t):t<0\big\}=(T_0,T_E)\,.
\]
Since $\Phi(T)$ is convex on $(T_0,T_E)$ and on $(T_E,\infty)$, with $\Phi(T)\ge E\,T$ for every $T\ge 0$ and $\Phi(T_E)=E\,T_E$, we conclude that $\Phi(T)$ is convex on $(T_0,\infty)$.

\medskip

\noindent{\it Step 4: Asymptotic growth of $\Phi$.} First, we claim that
\[
\lim_{T\to\infty}\frac{\psharp\,\Phi(T)}{T^\psharp}=1\,.
\]
Having in mind \eqref{global lower bound}, and taking into account that $T_{BE}(t)\to+\infty$ as $t\to -1$, it suffices to show that
\[
\lim_{t\to -1}\frac{\psharp G_{BE}(t)}{T_{BE}(t)^\psharp}=1\,.
\]
To prove this, we notice that the identity
\begin{equation}
  \label{idv}
  n\int_{H}U_{BE,t}^\psharp=\psharp\,G_{BE}(t)\,Y_{BE}(t)+t\,T_{BE}(t)^\psharp\qquad\forall t<-1\,,
\end{equation}
allows us to write
\[
\psharp\,\frac{G_{BE}(t)}{T_{BE}(t)^\psharp}=-\frac{t}{Y_{BE}(t)}+\frac{n\int_{H}U_{BE,t}^\psharp}{Y_{BE}(t)\,T_{BE}(t)^\psharp}\,.
\]
It will thus be enough to prove
\begin{equation}
  \label{vicky3}
  \lim_{t\to -1}Y_{BE}(t)=1\qquad \lim_{t\to -1}\int_{H}U_{BE,t}^\psharp=0\,.
\end{equation}
To this end, we first notice that by \eqref{vicky1} and \eqref{vicky2}
\begin{eqnarray*}
  Y_{BE}(t)^{p'}-1=\frac{\int_H(|x-t\,\ee|^{p'}-1)^{-(n-1)}}{\int_H(|x-t\,\ee|^{p'}-1)^{-n}}\le C(n)\,\frac{|t+1|^{-(n-3)/2}}{|t+1|^{-(n-1)/2}}=C(n)\,|t+1|\,,
\end{eqnarray*}
while
\begin{eqnarray*}
  \int_{H}U_{BE,t}^\psharp=
  \frac{\int_H(|x-t\,\ee|^{p'}-1)^{-(n-1)}}{\big(\int_H(|x-t\,\ee|^{p'}-1)^{-n}\big)^{(n-1)/n}}
  \le C(n)\,\frac{|t+1|^{-(n-3)/2}}{|t+1|^{-(n-1)^2/2n}}=C(n)\,|t+1|^{(n+1)/2n}
\end{eqnarray*}
so that \eqref{vicky3} is proven. Now, to prove that
\[
\lim_{T\to\infty}\Phi(T)-\frac{T^\psharp}{\psharp}=0
\]
we simply notice that, again by \eqref{idv},
\[
\psharp G_{BE}(t)-T_{BE}(t)^\psharp=\psharp\,G_{BE}(t)\Big(1+\frac{Y_{BE}(t)}t\Big)-\frac{n}t\,\int_H U_{BE,t}^\psharp\,.
\]
Since $|t+Y_{BE}(t)|\le|t+1|+|1-Y_{BE}(t)|\le C(n)\,|t+1|$, thanks to \eqref{vicky4} we have
\[
G_{BE}(t)\Big|1+\frac{Y_{BE}(t)}t\Big|\le C(n)\,|t+1|^{1-(n-1)/2n}=C(n)\,|t+1|^{(n+1)/2n}\to 0\,.
\]
This completes the proof of Theorem \ref{thm: properties of Phi}.
\end{proof}

\begin{proof}[Proof of Corollary \ref{corollary bestbest}] Since $\Omega$ is a set of locally finite perimeter in $\R^n$ \cite[Example 12.6]{maggiBOOK}, there exists $x_0 \in \pa \Om $ such that, up to a rotation,
\begin{equation}\label{eqn: blowup conv}
\begin{split}
\Om_r\to H&\qquad\mbox{in $L^1_{{\rm loc}}(\R^n)$},
\\
\Hi^{n-1} \llcorner \pa \Om_r \weak \Hi^{n-1} \llcorner \pa H&\qquad\mbox{as Radon measures on $\R^n$}.
\end{split}
\end{equation}
where we have set $\Om_r=(\Om-x_0)/r$, $r>0$. Precisely, every $x_0$ in the reduced boundary of $\Om$ satisfies \eqref{eqn: blowup conv} up to a rotation, see e.g. \cite[Theorem 15.5]{maggiBOOK}.

We now define a function $w_T$ depending on $T$ as follows. If $T\in(0,T_E)$, setting $t=T_S^{-1}(T)$, we let
\[
w_T(x)=U_{S,t}(x)\qquad \forall x\in\R^n\,;
\]
if $T=T_E$, we set $t=-1$ and let
\[
w_T(x)=U_{E,t}(x)\qquad\forall x\in\R^n\setminus\{\ee\}\,;
\]
finally, if $T>T_E$, then, setting $t= T_{BE}^{-1}(T)<-1$, we let
\[
w_T(x)=U_{BE, t}(x)\qquad\forall x\in\R^n\setminus \ov{B_1(t\,\ee)}\,.
\]
Notice that in each case, there exists a compact set $K_T$ with $K_T\cap H=\emptyset$ such that $w_T\in L^\pstar(\R^n\setminus U)$ and $\nabla w_T\in L^p(\R^n\setminus U)$ for every open neighborhood $U$ of $K_T$. In particular, for $\e>0$ small enough depending on $T$, we have $\{x_1>-\e\}\cap K_T=\emptyset$. We pick $\zeta\in C^\infty(\R^n)$ such that $\zeta=1$ on $\{x_1>-\e\}$ and $\zeta=0$ on $K_T$, and define $v_T=\zeta\,w_T$ on the whole $\R^n$. Then
\begin{equation}
  \label{varie}
  v_T\in L^\pstar(\R^n)\qquad \nabla v_T\in L^p(\R^n)\qquad\mbox{$v_T=w_T$ on $H$}\,.
\end{equation}

Next, we fix $R>0$ and consider $\psi_R\in C^\infty_c(B_{2R};[0,1])$ with $\psi_R=1$ on $B_R$. Finally, for each $r>0$, we define
\[
u_r(x)= r^{1-n/p}\,(\psi_R\,v_T)\Big(\frac{x-x_0}{r} \Big)\qquad x\in\Omega\,.
\]
By \eqref{eqn: blowup conv}, \eqref{varie} and $\psi_R\in C^\infty_c(B_{2R};[0,1])$ we can exploit dominated convergence to find that
\begin{eqnarray*}
  &&\int_{\Om}u_r^\pstar=\int_{\R^n}1_{\Om_r}(\psi_R v_T)^\pstar\to\int_H(\psi_R v_T)^\pstar=\int_H(\psi_R w_T)^\pstar
  \\
  &&\int_\Om|\nabla u_r|^p\to\int_H|\nabla(\psi_R\,w_T)|^p\,,
\end{eqnarray*}
as $r\to 0^+$. Similarly, since $(\psi_R\,v_T)^\psharp\in C^0_c(\R^n)$, by \eqref{eqn: blowup conv} we have
\[
\int_{\pa\Om}u_r^\psharp \, d\Hi^{n-1}=\int_{\R^n}(\psi_R\,v_T)^\psharp\,d(\H^{n-1}\llcorner\pa\Om_r)
\to\int_{\pa H}(\psi_R\,v_T)^\psharp\,d\H^{n-1}=\int_{\pa H}(\psi_R\,w_T)^\psharp\,d\H^{n-1}
\]
as $r\to 0^+$. Since
\[
\int_H w_T^\pstar=1\,,\qquad \int_{\pa H}w_T^\psharp=T^\psharp\,,\qquad\int_H|\nabla w_T|^p=\Phi(T)^p\,,
\]
for every $\de>0$ there exists $r$ small enough and $R$ large enough such that
\[
\Big|\int_{\Om}u_r^\pstar-1\Big|+\Big|\int_{\Om}|\nabla u_r|^p-\Phi(T)^p\Big|+\Big|\int_{\pa\Om}u_r^\psharp-T^\psharp\Big|<\de\,.
\]
In particular, we can find $\{\xi_r\}_{r>0} \subset C^{\infty}_c(\Om)$ such that
\[
\frac{\| u_r+ \xi_r\|_{L^\psharp(\pa \Om)}}{\| u_r + \xi_r\|_{L^\pstar(\Om)} } = \frac{\| u_r\|_{L^\psharp(\pa \Om)}}{\| u_r + \xi_r\|_{L^\pstar(\Om)} }= T\qquad\forall r>0\,,
\]
and $\| \xi_r\|_{L^\pstar(\Om)} \to 0$ and $\| \n \xi_r\|_{L^p(\Om)} \to 0$ as $r\to 0^+$. Then, for $r$ sufficiently small,
\[
\Phi_\Om(T)
\le \frac{\|\nabla u_r + \n \xi_r \|_{L^p(\Om)}}{\|u_r + \xi_r \|_{L^\pstar(\Om)}}\le (1+C\,\de)\,\Phi(T)
\]
for a constant $C=C(n,p)$.
\end{proof}

\begin{proof}[Proof of Theorem~\ref{thm: p=1}]
With the same reasoning as given in Corollary~\ref{cor: cor of mother}, we find that
\[
Y_S(t) G_S(t) + tT_S(T) \leq Y_S(t) |Dh|(H) + t \| h\|_{L^1(\pa H)}
\]
 for any $t \in (-1,1)$ and any non-negative $h$, vanishing at infinity, with $|Dh|(H)<\infty$, and with equality if and only if $h$ is a dilation translation image of $U_{S,t}$ orthogonal to $\ee$. In particular, if additionally $\|h \|_{L^1(\pa H)} = T_S(t),$ then
 \[
 G_S(t) \leq |Dh|(H).
 \]
 From this, we deduce that
 \[
 \Phi(T_S(t)) = G_S(t)
 \]
 for $t \in (-1,1).$ The same arguments given in the proof of Proposition ~\ref{prop: T and G} imply that $T_S(t)$ is a strictly decreasing function with range $[0,\infty)	$, and that $G_S(t)$ is strictly increasing for $t >0$ with range $( 2^{-1/n}S,S) $ and is strictly decreasing for $t<0$ with range $(2^{-1/n}S, \infty)$. Finally, the same proof as that of Theorem~\ref{thm: properties of Phi} shows that $\Phi(T)$ is a smooth function of $T$ that is decreasing for $T \in (0, T_0)$
  and concave for $T\in (0, T_*)$ for some $0<T_*<T_0$
   and increasing and convex for $T \in (T_0, \infty)$.
 Finally, to show that  $\Phi(T) = T + o(1)$ as $T\to \infty$, we will equivalently show that $G_S(t) =T_S(t) + o(1)$ as $t \to -1$. Indeed, since $Y(t,U_{S,t}) =1$ for all $-1<t<1$ when $p=1$, \eqref{eqn: eq other} implies that
 \[
 G_{S}(t) = n\int_H U_{S,t} -t\, T_{S}(t)=  T_{S}(t) +n\int_H U_{S,t} -(t+1)\, T_{S}(t) .
 \]
 Note that
 \[
 \int_H U_{S,t} = \frac{|B_1(t\,\ee) \cap H|}{|B_1(t\,\ee) \cap H|^{(n-1)/n} } = |B_1(t\,\ee) \cap H|^{1/n} = o(1)
 \]
 as $t \to -1$.
 Furthermore, since
 \[
 T(t)  =\frac{ \om_{n-1} (1-t^2)^{(n-1)/2}}{|B_1(t\,\ee) \cap H|^{(n-1)/n}},
 \]
 and we easily estimate that
 \[
 |B_1(t\,\ee) \cap H| = \om_{n-1} \int_{-t}^1 (1-s^2)^{(n-1)/2}\,ds \geq c \int_{-t}^1 (1-s)^{(n-1)/2} \geq C |1+t|^{(n+1)/2}
 \]
 for $t<0$, we see that
 \[
|t+1| T(t) \leq |t+1|^{1-(n+1)/2n} = o(1).
 \]
Hence, $G_S(t) =T_S(t) + o(1)$ as $t \to -1$ and the proof is complete.
\end{proof}

We conclude with the following proposition, which was mentioned in the comments after the statement of Theorem \ref{thm: p=1}.

\begin{proposition}\label{prop TEnp}
 For every $n\ge 2$, one has $T_E(n,p)\to+\infty$ as $p\to 1^+$.
\end{proposition}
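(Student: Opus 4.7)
The plan is to estimate directly both integrals defining
\[
T_E(n,p)=\frac{\|U_E\|_{L^{\psharp}(\{x_1=1\})}}{\|U_E\|_{L^{\pstar}(\{x_1>1\})}}
\]
as $p\to 1^+$. Since $U_E(x)=|x|^{-\alpha}$ with $\alpha=(n-p)/(p-1)\to\infty$, a direct calculation gives $\alpha\,\psharp=(n-1)p/(p-1)$ and $\alpha\,\pstar=np/(p-1)$, both divergent in the limit, while $\psharp\to 1^+$ and $\pstar\to n/(n-1)$.

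For the numerator, I would parametrize $\{x_1=1\}$ by $y\in\R^{n-1}$ so that $|x|^2=1+|y|^2$, pass to polar coordinates in $\R^{n-1}$, and use the substitution $u=r^2$ to recognize a standard Beta integral, obtaining
\[
\|U_E\|_{L^{\psharp}(\{x_1=1\})}^{\psharp}=\tfrac{1}{2}\,|S^{n-2}|\,B\!\Big(\tfrac{n-1}{2},\tfrac{n-1}{2(p-1)}\Big)\,.
\]
For the denominator, I would either slice by $x_1$ and rescale in the transverse variables, or pass to spherical coordinates on $\R^n$ (so $\{x_1>1\}$ becomes $\{r>1/\omega_1,\,\omega_1>0\}$); either way the radial integral $\int_1^{\infty}x_1^{n-1-\alpha\,\pstar}\,dx_1=(p-1)/n$ supplies a crucial prefactor $(p-1)$ and yields
\[
\|U_E\|_{L^{\pstar}(\{x_1>1\})}^{\pstar}=\tfrac{p-1}{2n}\,|S^{n-2}|\,B\!\Big(\tfrac{n-1}{2},\tfrac{n+p-1}{2(p-1)}\Big)\,.
\]
Geometrically, this factor $(p-1)$ reflects the increasing concentration of the $L^{\pstar}$ mass of $U_E$ near $\pa H$ as $p\to 1^+$, and it is precisely what makes the denominator decay strictly faster than the numerator.

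The asymptotics then follow at once from the expansion $B((n-1)/2,k)\sim \Gamma((n-1)/2)\,k^{-(n-1)/2}$ as $k\to\infty$, a consequence of $\Gamma(k+a)/\Gamma(k)\sim k^{a}$. With $k=(n-1)/(2(p-1))$ and $k=(n+p-1)/(2(p-1))$ respectively, we get
\[
\|U_E\|_{L^{\psharp}(\{x_1=1\})}^{\psharp}\sim C_1\,(p-1)^{(n-1)/2}\,,\qquad \|U_E\|_{L^{\pstar}(\{x_1>1\})}^{\pstar}\sim C_2\,(p-1)^{(n+1)/2}\,.
\]
Since $\psharp/\pstar=(n-1)/n$, it follows that
\[
T_E(n,p)^{\psharp}=\frac{\|U_E\|_{L^{\psharp}(\{x_1=1\})}^{\psharp}}{\big(\|U_E\|_{L^{\pstar}(\{x_1>1\})}^{\pstar}\big)^{(n-1)/n}}\sim C_3\,(p-1)^{(n-1)/2-(n-1)(n+1)/(2n)}=C_3\,(p-1)^{-(n-1)/(2n)}\,,
\]
which diverges as $p\to 1^+$; combined with $\psharp\to 1$ this yields $T_E(n,p)\to\infty$.

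The only delicate piece is the Beta-function asymptotic above, which is standard Stirling. If one prefers to avoid it entirely, the same conclusion can be reached by elementary upper and lower bounds on $\int_0^{\infty}r^{n-2}(1+r^2)^{-k}\,dr$, obtained by splitting the integral at $r=1$ and using monotonicity; these already capture the correct order $k^{-(n-1)/2}$ up to $n$-dependent constants, which is all that is needed. The main obstacle is therefore purely bookkeeping: correctly identifying the $(p-1)$ prefactor in the denominator and tracking how the exponents $\psharp$ and $\pstar$ interact with the two Beta asymptotics.
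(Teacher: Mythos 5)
Your proposal is correct and follows essentially the same strategy as the paper: write both the numerator and denominator of $T_E$ as explicit Beta functions, note the $(p-1)$ prefactor that the radial integration produces in the denominator, and extract the $p\to 1^+$ asymptotics via Stirling. The only (favorable) difference is presentational --- you invoke $\Gamma(k+a)/\Gamma(k)\sim k^a$ directly, which gives the power laws $(p-1)^{(n-1)/2}$ and $(p-1)^{(n+1)/2}$ immediately and avoids the paper's longer logarithmic Stirling bookkeeping, but the underlying computation is the same.
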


\begin{proof}
As a first step, we explicitly compute
\begin{equation}\label{eqn: TE explicit}
T_E^\psharp
=C\, \Bigg(
\frac{\Gamma\big(\frac{n-1}{2(p-1)}\big)}{\Gamma\big(\frac{(n-1)p}{2(p-1)}\big)}\Bigg)\Bigg/ \Bigg((p-1) \frac{ \Gamma\big(\frac{n+p-1}{2(p-1)}\big)}{\Gamma\big(\frac{np}{2(p-1)} \big)}\Bigg)^{(n-1)/n},
\end{equation}
where, here and throughout the proof, $C$ denotes a constant depending only on $n$, whose value may change at each instance.
Indeed,
\begin{align*}
\int_{\pa H}|x+\ee |^{-(n-1)p'} \, d\Hi^{n-1} &= \int_{\R^{n-1}} (|z|^2 +1)^{-(n-1)p'/2} \, dz = C\,\int_0^\infty (r^2 +1 )^{-(n-1)p'/2} r^{n-2} \, dr.
\end{align*}
Making the change of variables $s = 1/(r^2+1)$, the right-hand side becomes
\begin{align*}
C \int_0^1 s^{[(n-1)p'/2]-2} \left(1/s-1\right)^{(n-3)/2} \, ds
&=C \int_0^1 s^{[(n-1)/2(p-1)]-1} (1-s)^{(n-3)/2} \, ds\\
 = C\,\B\Big( \frac{n-1}{2(p-1)} , \frac{n-1}{2}\Big)&= C \,\Gamma\Big(\frac{n-1}{2(p-1)}\Big)\Big/\Gamma \Big(\frac{(n-1)p}{2(p-1)}\Big).
\end{align*}
To express the term in the denominator of $T_E$, the coarea formula implies that
\[
\int_{H}|x+\ee|^{-np'} \, dx  = \int_1^{\infty} r^{-np' + (n-1)} \int_{1/r}^1 (1-s^2)^{(n-3)/2} \,ds \,dr.
\]
By Fubini's Theorem, the right-hand side is equal to
\[
\int_0^1 (1-s^2)^{(n-3)/2} \int_{1/s}^\infty r^{-[n/(p-1)] -1} \, dr \, ds = \frac{p-1}{n} \int_0^1 (1-s^2)^{(n-3)/2}s^{n/(p-1)}\,ds\,.
\]
With the change of variables $\rho = s^2,$ this is equal to
\begin{align*}
\frac{p-1}{2n} \int_0^1 (1-\rho)^{(n-3)/2} \rho^{[n/2(p-1)] -1/2}\,d\rho
&=  \frac{p-1}{2n}\, \B\Big( \frac{n-1}{2}, \frac{n}{2(p-1) }+\frac{1}{2}\Big) \\
& = C (p-1)\, \Gamma \Big(\frac{n+p-1}{2(p-1)}\Big)\Big/\Gamma\Big(\frac{np}{2(p-1)}\Big).
\end{align*}
This proves \eqref{eqn: TE explicit}. By taking the logarithm of $T_E^\psharp/C$, we find that
\begin{equation}\label{eqn: log term}
\begin{split}	
\log (T_E^\psharp/C)
& =  \log \Gamma\Big(\frac{n-1}{2(p-1)}\Big)
	 -\log \Gamma\Big(\frac{p(n-1)}{2(p-1)}\Big) \\
& -\frac{n-1}{n}\left[ \log(p-1) + \log \Gamma\Big(\frac{n+p-1}{2(p-1)} \Big) - \log \Gamma\Big(\frac{np}{2(p-1)} \Big)\right].
\end{split}
\end{equation}
By Stirling's approximation, $\log \Gamma(z)$ asymptotically behaves like $ z \log(z)$ as $z \to \infty$. Hence, in the limit $p\to 1^-$ the first two terms on the right-hand side of \eqref{eqn: log term}, behave like
\begin{align*}
\frac{n-1}{2(p-1)} &\log\Big(\frac{n-1}{2(p-1)}\Big) - \frac{p(n-1)}{2(p-1)}\log\Big(\frac{p(n-1)}{2(p-1)}\Big)\\
&= \frac{(n-1)}{2(p-1)}\left[\log \Big(\frac{n-1}{2(p-1)}\Big) - \log\Big( \frac{(n-1)p}{2(p-1)}\Big)\right]
-\frac{(n-1)}{2}\log\Big( \frac{(n-1)p}{2(p-1)}\Big)
 \\
&= -\frac{(n-1)}{2(p-1)}\log(p)
-\frac{(n-1)}{2}\log\Big( \frac{(n-1)p}{2(p-1)}\Big) \\
&= -\frac{p(n-1)}{2(p-1)}\log(p)
+\frac{(n-1)}{2}\log\left(p-1\right)+C.
\end{align*}
On the other hand, the term in brackets on the right-hand side of \eqref{eqn: log term} behaves like
\begin{align*}
& \log(p-1) + \frac{n+p-1}{2(p-1)}\log\Big(\frac{n+p-1}{2(p-1)}\Big) - \frac{np}{2(p-1)}\log \Big(\frac{np}{2(p-1)}\Big)\\
&= \log(p-1)\Big( 1-\frac{n+p-1}{2(p-1)}+ \frac{np}{2(p-1)}\Big) + \frac{n+p-1}{2(p-1)}\log\Big(\frac{n+p-1}{2}\Big) - \frac{np}{2(p-1)}\log \Big(\frac{np}{2}\Big)\\
&= \log(p-1)\Big( \frac{n+1}{2}\Big) + \frac{n+p-1}{2(p-1)}\log\Big(\frac{n+p-1}{2}\Big) - \frac{np}{2(p-1)}\log \Big(\frac{n}{2}\Big) - \frac{np}{2(p-1)}\log(p)\\
&= \log(p-1)\Big( \frac{n+1}{2}\Big) + \frac{n+p-1}{2(p-1)}\log\Big(n+p-1\Big) - \frac{np}{2(p-1)}\log \left(n\right) - \frac{np}{2(p-1)}\log(p)+C.
\end{align*}
So, the full right-hand side of \eqref{eqn: log term} asymptotically behaves like
\[
-\frac{n-1}{2n}\log\left(p-1\right) +\frac{n-1}{2n(p-1)}\left[-(n+p-1)\log\left(n+p-1\right) + np\log \left(n\right)\right]
+C.
\]
Since $\log(n + p-1) = \log(n) + (p-1)/n  + o(p-1),$ this quantity is bounded above and below (with appropriate choices of $C$) by
\begin{align*}
&
-\frac{n-1}{2n}\log\left(p-1\right) +\frac{n-1}{2n(p-1)} \left[-(n+p-1)\Big(\log(n) +\frac{p-1}{n}\Big) + np\log \left(n\right)\right]
+C\\
&=-\frac{n-1}{2n}\log\left(p-1\right) +\frac{n-1}{2n(p-1)}\left[(n-1)(p-1)\log(n) -(n+p-1)\frac{p-1}{n} \right]+ C\\
&=-\frac{n-1}{2n}\log\left(p-1\right) + \frac{(n-1)}{2n}\left[(n-1)\log(n) -\frac{n+p-1}{n} \right]
+C
\end{align*}
The second term is bounded above and below by dimensional constants, while the first term goes to $+\infty$ as $p \to 1^+$.
\end{proof}

\appendix

\section{Proof of Theorem \ref{thm: equality cases}}\label{sec: appendix} In this appendix, we prove Theorem \ref{thm: equality cases}, which aims to characterize the equality cases in Theorem \ref{thm: mother}. The main step is to prove the validity of \eqref{eqn: formal IBP new } (see the proof of Theorem \ref{thm: mother}) without the assumption that $f\in C^1_c(\ov{H})$. This is the content of the following lemma, whose proof resembles \cite[Theorem 7]{cenv}.

\begin{lemma}\label{lem: IBP}
 If $n \geq 2$, $p\in [1,n)$, and $f$ and $g$ are non-negative functions in $L^1_{{\rm loc}}(H)$, vanishing at infinity, with
  \begin{equation}
    \label{hp on f and g APP}
    \left\{\begin{split}
      &\mbox{$\int_H|\nabla f|^p<\infty$ and $\int_{H}|x|^{p'}g^\pstar<\infty$ if $p>1$}
      \\
      &\mbox{$|Df|(H)<\infty$ and $\spt\,g\cc\ov{H}$ if $p=1$}
      \\
      &\| f\|_\norm = \|g\|_\norm=1
    \end{split}
    \right .
  \end{equation}
then \eqref{eqn: formal IBP new } holds for every $t\in\R$, that is
\begin{equation}
  \label{formal}
  n\int_{H}g^\psharp\le -\psharp\int_H\,f^{\psharp-1}\,\nabla f\cdot (T-t\,\ee)+t\,\int_{\pa H}f^\psharp\,,\qquad\forall t\in\R\,.
\end{equation}
Here $T=\nabla\vphi$ is the Brenier map from $f^\pstar\,dx$ and $g^\pstar dx$.
\end{lemma}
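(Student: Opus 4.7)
The plan is to upgrade the integration-by-parts step in the proof of Theorem~\ref{thm: mother} from the case $f\in C^1_c(\ov H)$ to general $f$ satisfying \eqref{hp on f and g APP}, keeping the Brenier map $T=\nabla\vphi$ between $f^{\pstar}\,dx$ and $g^{\pstar}\,dx$ fixed. The Monge--Amp\`ere/AM--GM chain \eqref{eqn: AMGM inequality} still yields
\[
n\int_H g^{\psharp}\,dx\le \int_\Om f^{\psharp}\,d(\DIV T)\,,
\]
and $\DIV(t\,\ee)=0$ lets us replace $T$ by $S=T-t\,\ee\in (BV\cap L^\infty)_{\rm loc}(\Om;\R^n)$ in the measure. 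Thus \eqref{formal} follows once we establish
\[
\int_\Om f^{\psharp}\,d(\DIV S)\le-\psharp\int_H f^{\psharp-1}\,\nabla f\cdot S\,dx-\int_{\pa H}f^{\psharp}\,(S\cdot\ee)\,d\Hi^{n-1}
\]
and then apply the bound $S\cdot(-\ee)\le t$ $\Hi^{n-1}$-a.e.~on $\spt(f)\cap\pa H$, which (as in the proof of Theorem~\ref{thm: mother}) comes from $T(\spt(F\,dx))\subset\ov H$ via standard properties of the $BV$ trace.

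The starting point is the $BV$ integration by parts identity of \cite{EvansGariepyBOOK},
\[
\int_H \psi\,d(\DIV S)=-\int_H\nabla\psi\cdot S\,dx-\int_{\pa H}\psi\,(S\cdot\ee)\,d\Hi^{n-1}\qquad\forall\,\psi\in C^1_c(\Om\cap\ov H)\,,
\]
which I would extend to $\psi=f^{\psharp}$ via a cutoff-plus-mollification argument. Set $\Om_k:=\{x\in\Om:|x|<k,\,d(x,\pa\Om)>1/k\}\cc\Om$, pick cutoffs $\chi_k\in C^\infty_c(\Om;[0,1])$ with $\chi_k\uparrow 1_\Om$, and let $\tilde f$ be the even reflection of $f$ across $\pa H$ (so $\tilde f\in W^{1,p}(\R^n)\cap L^{\pstar}(\R^n)$ when $p>1$, and $\tilde f\in BV_{\rm loc}(\R^n)\cap L^{n/(n-1)}(\R^n)$ when $p=1$). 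For a standard mollifier $\rho_\e$ and $\e>0$ small depending on $k$, the functions
\[
\psi_{k,\e}:=\bigl(\rho_\e*(\chi_k\,\tilde f)\bigr)^{\psharp}
\]
lie in $C^1_c(\Om\cap\ov H)$, so the $BV$ identity applies to each $\psi_{k,\e}$. In the case $p=1$, the assumption $\spt g\cc\ov H$ forces $\ov\Om$ to be compact, so $\Om$ itself can be taken bounded and the construction simplifies.

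Sending $\e\to 0^+$ and then $k\to\infty$ along a suitable diagonal $\e_k\to 0$, the gradient term should converge to $\psharp\int_H f^{\psharp-1}\nabla f\cdot S\,dx$ by dominated convergence, using $f^{\psharp-1}\nabla f\cdot S\in L^1(H)$ (which follows from H\"older and the push-forward identity as in \eqref{eqn: Holder step}); the boundary term converges to $\int_{\pa H}f^{\psharp}(S\cdot\ee)\,d\Hi^{n-1}$ by $L^1(\pa H)$-continuity of the trace combined with $f\in L^{\psharp}(\pa H)$ from the Escobar inequality \eqref{escobar inequality lp}. The main obstacle is the measure term $\int_\Om\psi_{k,\e_k}\,d(\DIV S)$, because $\DIV S=\DIV T$ may carry a singular part of Hausdorff dimension $n-1$. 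The resolution is that $\vphi$ is convex, so $\DIV T=\Delta\vphi$ is a \emph{nonnegative} Radon measure on $\Om$, and Fatou's lemma applied to $\psi_{k,\e_k}\to f^{\psharp}$ pointwise $\DIV T$-a.e.\ gives $\int_\Om f^{\psharp}\,d(\DIV S)\le\liminf_k\int_\Om\psi_{k,\e_k}\,d(\DIV S)$. The hypothesis that $F\,dx$ is concentrated on $\Om$, i.e.\ $f=0$ a.e.~outside $\Om$, ensures no mass is lost as $\Om_k\uparrow\Om$. Combining these limits with the trace bound $S\cdot(-\ee)\le t$ yields \eqref{formal}.
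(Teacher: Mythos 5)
There is a genuine gap, and it is precisely where you suspected: the Fatou step with respect to the measure $\DIV T$. Since $\DIV T=\Delta\vphi$ generically carries a non-trivial singular part $\DIV^s T$ concentrated on a Lebesgue-null set, the pointwise-$\DIV T$-a.e.\ convergence $\psi_{k,\e_k}\to f^{\psharp}$ that your argument invokes is not a consequence of the approximation you have constructed: mollifications converge $\mathcal L^n$-a.e., but there is no control of their pointwise limits on the singular support of $\Delta\vphi$. Worse, the intermediate object $\int_\Om f^{\psharp}\,d(\DIV T)$ is not well-defined for a Lebesgue-equivalence-class function $f$: changing $f^{\psharp}$ on an $\mathcal L^n$-null set changes the value of this integral, so the bound $n\int_H g^{\psharp}\le\int_\Om f^{\psharp}\,d(\DIV T)$ you write first is ambiguous. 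The secondary issues (the dominated convergence for the gradient term, which would require a domination valid uniformly across the cutoff and mollification parameters even though $S$ is only locally bounded on $\Om$) compound this.

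The paper avoids the problem by never forming $\int f^{\psharp}\,d(\DIV T)$ for a non-continuous $f$. It keeps the chain \eqref{eqn: AMGM inequality} in the Lebesgue form $n\int_{H} g^{\psharp}=n\int(\det\nabla^2\vphi)^{1/n}f^{\psharp}\,dx$, and only at the level of the compactly supported $C^1$ approximants $f_{\e,k}$ (supported in a fixed $\Om_\e\cc\Om$, where $S$ is bounded) does it pass from the determinant to $\DIV S$ and integrate by parts. Fatou is then applied with respect to Lebesgue measure on the determinant side, in $k$ and again in $\e$, while the integration-by-parts side converges first by boundedness of $S$ on $\Om_\e$ ($k\to\infty$), and then by a weak-$L^{p'}$ $\rightharpoonup$ / strong-$L^p$ pairing for $f_\e^{\psharp-1}S$ and $\nabla f_\e$ ($\e\to 0$), which is more delicate than a single dominated-convergence passage. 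If you reorganize your proof so that the substitution $(\det\nabla^2\vphi)^{1/n}\,dx\le\tfrac1n\DIV T$ and the integration by parts happen only at the level of $C^1_c(\Om_\e\cap\ov{H_\e})$ truncations (as in the paper, whose $f_\e$ additionally dilates $f$ inward to ensure $\spt f_\e\cc\Om$), and the limit is taken in the resulting non-singular inequality, the argument closes. Your reflection-plus-mollification scheme could replace the paper's dilation-plus-cutoff at that point, but the ordering of steps matters essentially. Note also that the $p=1$ case in the paper is handled with a genuinely different truncation $f_k=1_{B_k}\min\{f,k\}$ and uses that $\ov\Om$ is compact to get $S\in(BV_{\rm loc}\cap L^\infty)(H;\R^n)$; saying merely that ``the construction simplifies'' misses that the measure $Df$ replaces $\nabla f\,dx$ and the entire gradient-term limit changes character.
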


\begin{proof} We let $\Om$ be the interior of $\{\vphi<\infty\}$, and recall that $T\in (BV\cap L^\infty)_{{\rm loc}}(\Om;\R^n)$ with $F\,dx$ concentrated on $H\cap \Om$. We notice that in the proof of Theorem \ref{thm: mother}, see \eqref{eqn: AMGM inequality}, the identity
\begin{equation}
  \label{identity}
  \int_H\,g^\psharp=\int_H\,(\dett \nabla^2 \vphi)^{1/n} f^\psharp\,,
\end{equation}
was established without exploiting the additional assumption $f\in C^1_c(\ov{H})$. Thus \eqref{identity} also holds in the present setting.

We first let $p\in(1,n)$. By a translation orthogonal to $\ee$, we may assume that $0 \in \Om$. For $\e>0$ let $\eta_\e\in C^\infty_c(B_{2/\e};[0,1])$ with $\eta_\e=1$ on $B_{1/\e}$ and $\eta_\e\uparrow1$ pointwise on $\R^n$ as $\e\to 0^+$, and set
\[
f_\e (x) = \min \Big\{ f\Big( \frac{x}{1-\e}\Big), f(x) \eta_\e ( x) \Big\}\ind_{H_\e}(x),\qquad x\in H\,,
\]
where $H_\e=\{x_1>\e\}$. By density of $C^0_c(H)$ into $L^\pstar(H)$ we see that $f\circ((1-\e)^{-1}\Id)\to f$ in $L^\pstar(H)$ as $\e\to 0^+$, so that $f_\e\to f$ in $L^\pstar(H)$. Analogously, $\nabla[f\circ((1-\e)^{-1}\Id)]\to\nabla f$ in $L^p(H)$ as $\e\to 0^+$. If we choose $\eta_\e(x)=\eta(\e\,x)$ for some fixed $\eta\in C^1_c(B_2;[0,1])$ with $\eta=1$ on $B_1$, then we find
\[
\int_H|f\nabla\eta_\e|^p\le\Big(\int_{\R^n\setminus B_{1/\e}}f^\pstar\Big)^{p/\pstar}\,\Big(\int_{B_2}|\nabla\eta|^n\Big)^{p/n}
\to 0\qquad\mbox{as $\e\to 0^+$}\,,
\]
and thus $\nabla(f\,\eta_\e)\to\nabla f$ in $L^p(H)$. Finally, $\int_{H\setminus H_\e}|\nabla f|^p\to 0$ as $\e\to 0^+$, so that
\begin{equation}
  \label{convergence eps}
  \left\{\begin{split}
    &\mbox{$f_\e\to f$ in $L^\pstar(H)$ and a.e. on $H$}
    \\
    &\mbox{$1_{H_\e}\,\nabla f_\e\to\nabla f$ in $L^p(H)$}
  \end{split}\right .\qquad\mbox{as $\e\to 0^+$}\,.
\end{equation}
Moreover, as  $0\in\Om$ and $f=0$ a.e. on $\Om^c$, there exists an open set $\Om_\e\cc\Om$ such that $\supp (f_\e)\cc\Om_\e$. We can thus find $\{f_{\e,k}\}_{k\in\N}\subset C^1_c(\Om_\e\cap \ov{H_\e})$ such that
\begin{equation}
  \label{convergence}
  \left\{
  \begin{split}
    &\mbox{$f_{\e,k}\to f_\e$ in $L^\pstar(H_\e)$ and a.e. on $H_\e$}
    \\
    &\mbox{$\nabla f_{\e,k}\to\nabla f_\e$ in $L^p(H_\e)$}
  \end{split}
   \right .\qquad\mbox{as $k\to\infty$}\,.
\end{equation}
Since $f_{\e,k}\in C^1_c(\ov{\He})$, arguing as in Theorem~\ref{thm: mother} we find that
\begin{equation}\label{eqn: formal IBP epsilon}
 n\int_{H_\e}\,(\dett \nabla^2 \vphi)^{1/n} f_{\e,k}^\psharp\leq - \psharp \int_{\He} f_{\e,k}^{\psharp -1} \n f_{\e,k} \cdot S dx  + t
 \int_{\pa {\He}} f_{\e, k}^{\psharp} \, d\Hi^{n-1}\,
 \end{equation}
where $S=T-t\,\ee\in L^\infty_{{\rm loc}}(\Om;\R^n)$. Since $S$ is bounded on $\Om_\e$, where the $f_{\e,k}$ are uniformly supported in, and since $\psharp-1=\pstar/p'$, by \eqref{convergence} we find
\[
\lim_{k\to\infty}\int_{\He} f_{\e,k}^{\psharp -1} \n f_{\e,k} \cdot S dx=\int_{\He} f_\e^{\psharp -1} \n f_\e \cdot S dx\,.
\]
Moreover, by the trace inequality
\[
\|u\|_{L^\psharp(\pa A)}\le C(A)\,\Big(\|\nabla u\|_{L^p(A)}+\|u\|_{L^1(A)}\Big)\, ,
\]
which is valid whenever $A$ is an open bounded Lipschitz set (see, for example, \cite{MV2005}), and again by the uniform support property, \eqref{convergence} implies
\[
\lim_{k\to\infty}\int_{\pa {\He}} f_{\e, k}^{\psharp} \, d\Hi^{n-1}=\int_{\pa {\He}} f_\e^{\psharp} \, d\Hi^{n-1}\,.
\]
Hence, by pointwise convergence and Fatou's lemma, \eqref{eqn: formal IBP epsilon} implies
\begin{equation}\label{eqn: formal IBP epsilon xx}
 n\int_{H_\e}\,(\dett \nabla^2 \vphi)^{1/n} f_\e^\psharp\leq - \psharp \int_{\He} f_\e^{\psharp -1} \n f_\e \cdot S + t
 \int_{\pa {\He}} f_\e^{\psharp} \, d\Hi^{n-1}\,.
 \end{equation}
In order to take the limit $\e\to 0^+$ in \eqref{eqn: formal IBP epsilon xx}, we first notice that $f_\e\le f$ everywhere on $H$. Hence, by \eqref{eqn: integral push forward} and \eqref{hp on f and g APP}, we find
\[
\int_{H}|f_\e^{\psharp-1}\,S|^{p'}\le \int_{H}f^{\pstar}\,|S|^{p'}=
\int_{H}g^\pstar\,|x-t\,e_1|^{p'}<\infty\,.
\]
Since $f_\e\to f$ a.e. on $H$, it must be $f_\e^{\psharp -1}S\rightharpoonup f^{\psharp-1}S$ in $L^{p'}(H)$ as $\e\to0^+$. By combining this last fact with the strong convergence $1_H\,\nabla f_\e\to \nabla f$ in $L^p(H)$, we conclude that
\begin{equation}
  \label{second xx}
  \int_{\He} f_\e^{\psharp -1} \n f_\e \cdot S dx=\int_H f_\e^{\psharp -1} \n f_\e \cdot S \to \int_H f^{\psharp -1} \n f \cdot S
\end{equation}
as $\e\to 0^+$. Next, let us set $h_\e(x)=f_\e(x+\e\,\ee)$ for $x\in H$, so that $1_{H_\e}\nabla f_\e\to\nabla f$ in $L^p(H)$ and the density of $C^0_c(H)$ in $L^p(H)$ gives us $\nabla h_\e\to\nabla f$ in $L^p(H)$. By applying \eqref{escobar inequality lp} to $h_\e-f$ we find that $h_\e\to f$ in $L^\psharp(H)$, which clearly implies
\[
\lim_{\e\to 0^+}\int_{\pa H_\e}f_\e^\psharp\,d\H^{n-1}=\int_{\pa H}f^\psharp\,d\H^{n-1}\,.
\]
By combining this last fact with \eqref{second xx} with the fact that $1_{H_\e}f_\e^{\psharp}\to 1_H\,f^\psharp$ a.e. on $\R^n$ and with Fatou's lemma, we deduce from \eqref{eqn: formal IBP epsilon xx} that
\[
n\int_H\,(\dett \nabla^2 \vphi)^{1/n} f^\psharp\leq - \psharp \int_H f^{\psharp -1} \n f \cdot S + t
 \int_{\pa H} f^{\psharp} \, d\Hi^{n-1}\,.
\]
Combining this inequality with \eqref{identity}, we complete the proof of the lemma in the case $p\in(1,n)$.

We now consider the case $p=1$. We now have $|Df|(H)<\infty$ and $\spt\,g$ bounded. Thanks to the latter property, by arguing as in \cite[pg. 96]{MV2005} we can assume that $S=T-t\,e_1\in (BV_{{\rm loc}}\cap L^\infty)(H;\R^n)$. Setting $f_k=1_{B_k}\,\min\{f,k\}$, $k\in\N$, then $f_k\,S\in BV(\R^n;\R^n)$ and by the divergence theorem
\[
\Div(f_k\,S)(H)=\int_{\pa H}f_k\,S\cdot(-\ee)=\int_{\pa H}f_k\,T\cdot(-\ee)+t\int_{\pa H}f\le t\int_{\pa H}f\,.
\]
If we identify $f_k$ and $S$ with their precise representatives, we have
\[
\Div(f_k\,S)(H)=\int_H\,f_k\,d(\Div S)+\int_H\,S\cdot Df_k
\]
where, of course,
\[
\int_H\,f_k\,d(\Div S)=\int_H\,f_k\,d(\Div T)\ge n\,\int_H\,f_k\,(\det\nabla^2\vphi)^{1/n}\,.
\]
We have thus proved
\begin{equation}
  \label{second x}
  n\,\int_H\,f_k\,(\det\nabla^2\vphi)^{1/n}\le-\int_H\,S\cdot Df_k+t\,\int_{\pa H}\,f_k\,d\H^{n-1}\,.
\end{equation}
By monotone convergence $\int_{\pa H}f_k\to\int_{\pa H}f$, while \eqref{eqn: supports} and the boundedness of $\spt g$ imply the existence of $R>0$ such that $|S|\le R$ on $\spt(Df)$, and thus
\[
\Big|\int_H\,S\cdot Df_k-\int_H\,S\cdot Df_k\Big|\le R\,|Df|\Big(H\setminus( B_k\cup\{f<k\}^{(1)}\Big)
\]
where $E^{(1)}$ denotes the set of density points of a Borel set $E\subset\R^n$ and we have used $D(1_E\,f)(K)=Df(E^{(1)}\cap K)$ for every $K\subset\R^n$. Since $|Df|(H)<\infty$, letting $k\to\infty$ and finally exploiting Fatou's lemma we deduce from \eqref{second x}
\[
-\int_H\,S\cdot Df+t\,\int_{\pa H}\,f\,d\H^{n-1}\ge   n\,\int_H\,f\,(\det\nabla^2\vphi)^{1/n}=n \int_H\,g\,,
\]
where in the last inequality we have used \eqref{identity}. The proof is complete.
\end{proof}

\begin{proof}[Proof of Theorem~\ref{thm: equality cases}] Let us consider two functions $f$ and $g$ as in Lemma \ref{lem: IBP} such that, for some $t\in\R$,
\begin{equation}
  \label{equality}
  n\int_{H}g^\psharp=\psharp\|\nabla f\|_{L^p(H)}\,Y(t,g)+t\,\int_{\pa H}f^\psharp\qquad\mbox{with}\quad\int_{\pa H}f^\psharp>0\,.
\end{equation}
where $|Df|(H)$ replaces $\|\nabla f\|_{L^p(H)}$ if $p=1$.
By arguing as in the proof of \cite[Proposition 6]{cenv} in the case $p\in(1,n)$, and as in \cite[Theorem A.1]{FigalliMaggiPratelliINVENTIONES} if $p=1$, we find that $T(x)=\n \vphi(x) = \lambda (x-x_0)$ for some $\lambda >0$ and $x_0 \in \R^n$.

We claim that $x_0 \cdot\ee=0$. Keeping the proof of Lemma~\ref{lem: IBP} in mind, \eqref{equality} implies that
\[
\lim_{\e\to 0^+} \lim_{k \to \infty} \int_{\pa \He} (T \cdot \ee) \,f_{\e,k}^{\psharp}\, d\Hi^{n-1}=0\,,
\]
where $T= \lambda (x-x_0)$ gives
\[
\int_{\pa \He} (T \cdot \ee) \,f_{\e,k}^{\psharp}=\lambda(\e-x_0 \cdot \ee)\int_{\pa \He} f_{\e,k}^{\psharp}\,.
\]
Since we have proved that
\[
\lim_{\e\to 0^+} \lim_{k \to \infty} \int_{\pa \He}f_{\e,k}^{\psharp}\, d\Hi^{n-1}=\int_{\pa H}f^{\psharp}\, d\Hi^{n-1}\,,
\]
where the latter quantity is assumed positive, we conclude that $x_0 \cdot \ee=0$, as claimed. Up to a translation and up to apply an $L^\pstar$-norm preserving dilation to $f$, we can now assume that $x_0=0$ and $\lambda=1$, that is $T(x)=x$.

We first consider the case $p\in(1,n)$. By combining \eqref{formal} and \eqref{equality} we find that we have an equality case in the H\"{o}lder's inequality $\int_{\Ha} A \cdot B \, dx \le\|A\|_{L^p(H)}\,\|B\|_{L^{p'}(H)}$ with
\[
A =- \n f\qquad B = f^{\psharp-1} (x- t\,\ee )\,.
\]
In particular, there exist Borel functions $v:H\to\R^n$ and $a,b:H\to[0,\infty)$ such that $A= a\, v$, $B=b\,v$, and $a = c\,b^{1/(p-1)}$ for some constant $c>0$. Hence, if we set $r=|x-t\,\ee|$ and $v=(x-t\,\ee)/r$, there exists a Borel function $u :[0,\infty)\to[0,\infty)$ such that
\[
f(x)=u(r)\qquad
-\n f (x) = -u'(r) \frac{ x - t\,\ee}{|x-t \,\ee|}\,,
\]
and the above conditions hold with $a = -u'(r)$ and  $b= r u(r)^{\psharp -1}$. In particular,
\[
-u'(r) = c \, (r u(r)^{\psharp - 1})^{1/(p-1)}\qquad\mbox{for a.e. $r>0$}\,,
\]
and
consequently, for some $c_1>0$ and $c_2 \in \R$
\[
u(r) =( c_1 r^{p'} + c_2)_+^{-n/\pstar}\qquad\forall r>0\,,
\]
where $x_+=\max\{x,0\}$. In terms of $f$, this means that
\[
f(x) = (c_1|x-\, t \ee|^{p'}+ c_2)_+^{-n/\pstar}\qquad\forall x\in H\,.
\]
The cases where $c_2$ is positive, zero, and negative correspond, respectively, to $f$ being a dilation-translation image of $U_S$, $U_E$, and $U_{BE}$. If $t>0$, the finiteness of the $L^\pstar(H)$-norm of $f$ excludes the possibilities that $f$ is a dilation-translation image orthogonal to $\ee$ of $U_E$ and $U_{BE}$.

Let us now consider the case $p=1$. Recall that we have already set $T(x)=x$, so that $f=g$ and the combination of \eqref{formal} and \eqref{equality} gives
\begin{equation}\label{eqn: holder p=1}
-\int_{\Ha} (x -t\,\ee )\cdot Df = \| \cdot - t\, \ee\|_{L^\infty (\supp(Df))}  |Df|(H)\,,
\end{equation}
that is
\[
-Df = \frac{x - t\, \ee}{|x- t\, \ee|} |Df| \qquad\mbox{as measures on $H$}\,.
\]
By \cite[Exercise 15.19]{maggiBOOK}, there exists $\mu>0$ such that $f=c\,1_{H\cap B_\mu(t\,\ee)}$, as required.
\end{proof}

\bibliography{references3}
\bibliographystyle{alpha}

\end{document}